\newtheorem{thm}{Theorem}[section]
\newtheorem{lem}[thm]{Lemma}
\newtheorem{prop}[thm]{Proposition}
\theoremstyle{definition}
\newtheorem{defin}[thm]{Definition}
\numberwithin{equation}{section}
\newcommand{\su}{\succeq}
\newcommand{\kr}{\text{Kr}}
\newcommand{\cR}{{}^cR}
\newcommand{\cT}{{}^cT}
\newcommand{\cE}{\mathcal{E}}
\newcommand{\CE}{\widetilde{\mathcal{E} } }
\newcommand{\mK}{\mathcal{K}}
\newcommand{\T}{{}^cT}
\newcommand{\lra}{\longrightarrow}
\newcommand{\ck}{{}^c\kappa}
\newcommand{\ct}{{}^ct}
\newcommand{\NC}{\textrm{NC}}
\newcommand{\NCL}{\textrm{NCL}}
\newcommand{\com}{\widetilde{\omega}}
\begin{document}




\title[Multiplication of operator-valued c-free random variables]{On the multiplication of operator-valued c-free random variables}

\author[M. Popa]{Mihai Popa}
\address{Department of Mathematics, University of Texas at San Antonio, One UTSA Circle
San Antonio, Texas 78249, USA}
\email{mihai.popa@utsa.edu}

\author[V. Vinnikov]{Victor Vinnikov}
\address{Department of Mathematics, Ben-Gurion University of the Negev, Be'er Sheva 8410501, 
Israel}
\email{vinnikov@cs.bgu.ac.il}

\author[J.-C. Wang]{Jiun-Chiau Wang}
\address{Department of Mathematics and Statistics, University of Saskatchewan,
Saskatoon, Saskatchewan S7N 5E6, Canada}
\email{jcwang@math.usask.ca}

\date{}

\begin{abstract}
The paper discusses some results concerning the multiplication of non-commutative random variables that are c-free with respect to a pair $( \Phi, \varphi) $, where $ \Phi $ is a linear map with values in some Banach or C$^\ast$-algebra and  $ \varphi $ is scalar-valued. In particular, we construct a suitable analogue of the Voiculescu's $ S $-transform for this framework.
\end{abstract}

\subjclass[2010]{Primary 46L54; Secondary 05A15}

\keywords{c-free independence, free independence, multiplicative convolution, operator-valued map, planar trees, Kreweras complementary}

\maketitle

\section{Introduction}
 The terminology ``c-free independence''(or c-freeness) was first used in the 1990's by M. Bozejko, R. Speicher and M.Leinert (see \cite{bs}, \cite{bls}) to denote a relation similar to D.-V. Voiculescu's free independence, but in the framework of algebras endowed with two linear functionals (see Definition \ref{def:1} below). The additive c-free convolution and the analytic characterization of the correspondent infinite divisibility was described in 1996 (see \cite{bls}); appropriate instruments for dealing
 with the multiplicative c-free convolution appeared a decade later, in \cite{mvp-jcw}. There, for $ X $ a non-commutative random variable, we define an analytic function $\cT_X(z) $, inspired by Voiculescu's $ S $-transform, such that if $ X $ and $ Y $ are c-free , then $\cT_{XY}(z)= \cT_X(z) \cdot \cT_Y(z) $. Alternate proofs of this result were given in \cite{mp} and \cite{mp-proc}.
 
  The present work discusses the multiplicative c-free convolution in the framework of \cite{mlot}, namely when one of the functionals is replaced by a linear map with values in a (not necessarily commutative) Banach algebra. In particular, we show that the combinatorial methods from \cite{mp} can be adapted to this more general framework. Notably, in the case of 
 free independence over some Banach algebra, as showed in \cite{dykema}, \cite{dykema2}, the analogue of Voculescu's $S$-transform satisfies a ``twisted multiplicative relation'', namely $ T_{XY}(b)= T_X( T_Y(b)\cdot b \cdot T_Y(b)^{ - 1})\cdot T_Y(b) $. The main result of the present work, Theorem \ref{mainthm}, shows that the non-commutative $\cT$-transform satisfies the usual multiplicative relation: $ \cT_{XY}(z) = \cT_X(z) \cdot \cT_Y(z) $ for $ X, Y $ c-free non-commutative random variables.
 
The paper is organized as follows. Section 2 presents some preliminary notions and results, mainly concerning the lattice of non-crossing linked partitions and its connection to free and c-free cumulants and to $t$- and ${}^ct$-coefficients. The main result of the section is Proposition \ref{vanishtcoeff}, the characterization of c-freeness in terms of ${}^ct$-coefficients. Section 3 restates the results on planar trees used in \cite{mp} and utilize them for the main result of the paper, Theorem \ref{mainthm}. Section 4 discusses some aspects of infinite divisibility for the multiplicative c-free convolution in operator-valued framework. The results from the scalar case (see \cite{mvp-jcw}) can be easily extended to the framework of a commutative algebra of operators, but they are generally not valid in the non-commutative case.

\section{Framework and notations}
  
\subsection{Non-crossing partitions and c-free cumulants} In the first part of this paper we will consider $ A $ and $ B $ to be two unital Banach algebras and
$ \varphi:A \lra \mathbb{C} $, respectively $ \Phi: A \lra B $ to be two unital and linear maps. If $ A $ and $ B $ are C$^{\ast}$- or von Neumann algebras, then we will require that $ \varphi $, respectively $ \Phi $ to be positive, respectively completely positive.

\begin{defin}\label{def:1}

$ A_1 $ and $ A_2 $, two unital subalgebras of $ A $, are said to be \emph{c-free} with respect to
 $ ( \Phi, \varphi )$ if for all $ n $ and all  $ a_1, a_2, \dots, a_n $ such that 
$\varphi(a_j)=0$ and 
  $a_j\in A_{\varepsilon(j)}$ with 
  $\varepsilon(j)\in\{1,2\}$
  and 
  $\varepsilon(j )\neq\varepsilon(j + 1 ) $
 we have that
\begin{enumerate}
\item[(i)] $ \varphi ( a_1 \cdots a_n ) = 0 $
\item[(ii)] $ \Phi ( a_1 \cdots a_n )= \Phi(a_1 ) \cdots \Phi ( a_ n ) $.
\end{enumerate}
Two elements,  $ X $ and $ Y, $ of $ A $ are said two be \emph{c-free} with respect to $ ( \Phi, \varphi ) $ if the unital subalgebras of $  A $  generated by $ X $ and $ Y $ are c-free, as above. If $ A $ is a C$^\ast$- or a von Neumann algebra, then we will require that the unital C$^\ast$-, respectively von Neumann subalgebras of $ A $ generated by $ X $ and $ Y $ are c-free. If only condition {(i)} holds true, then the subalgebras $A_1, A_2 $ (respectively the elements $ X, Y $) are said to be \emph{free} with respect to $\varphi$.
\end{defin}

As shown in \cite{bls}, \cite{mp-commstoch}, there is a convenient combinatorial characterization of c-freeness in terms of non-crossing partitions, that we will summarize below.

  A non-crossing partition $\gamma$ of the ordered set
 $\{1,2,\dots,n\}$ is a collection $C_1,\dots, C_k$
 of mutually disjoint subsets of $\{1,2,\dots,n\}$, called blocks, such that their union is the entire set $ \{ 1, 2, \dots, n \} $ and there are no crossings, in the sense that 
  there
 are no two blocks $C_l, C_s$ and $i<k<p<q$ such that $i,p\in C_l$
 and $k,q\in C_s$.

\textbf{Example 1}: Below is represented graphically the
non-crossing
partition\\
 $\pi=(1,5,6), (2,3), (4),(7,10),(8,9)$:

 \setlength{\unitlength}{.14cm}
 \begin{equation*}
 \begin{picture}(10,8)

\put(-16.5,0.5){1}

\put(-15.9,3){\circle*{1}} \put(-16,3){\line(0,1){5}}

\put(-12.5,0.5){2}

\put(-12,3){\circle*{1}} \put(-12,3){\line(0,1){3}}

\put(-8.5,0.5){3}

\put(-8,3){\circle*{1}}\put(-8,3){\line(0,1){3}}

\put(-12,6){\line(1,0){4}}

\put(-4.5,0.5){4}

\put(-3.9,3){\circle*{1}}\put(-4,3){\line(0,1){3}}

\put(-0.5,0.5){5}

\put(0,3){\circle*{1}}\put(0,3){\line(0,1){5}}

\put(3.5,0.5){6}

\put(4,3){\circle*{1}}\put(4,3){\line(0,1){5}}

\put(-16,8){\line(1,0){20}}

\put(7.5,0.5){7}

\put(8.1,3){\circle*{1}}\put(8,3){\line(0,1){5}}

\put(11.5,0.5){8}

\put(12.1,3){\circle*{1}}\put(12,3){\line(0,1){3}}

\put(15.5,0.5){9}

\put(16.2,3){\circle*{1}}\put(16,3){\line(0,1){3}}

\put(12,6){\line(1,0){4}}

\put(19,0.5){10}

\put(20.2,3){\circle*{1}}\put(20,3){\line(0,1){5}}

\put(8,8){\line(1,0){12}}

 \end{picture}
  \end{equation*}

 The set of all
non-crossing partitions on the set  $ \{ 1, 2, \dots, n \} $ will be denoted by
$NC(n)$.   It    has a lattice structure with respect to the reversed
refinement order, with the biggest, respectively smallest element
$\mathbbm{1}_{n}=(1,2,\dots,n)$, respectively $0_{n}=(1),\dots,(n)$.
For $\pi,\sigma\in NC(n)$ we will denote by $\pi\bigvee\sigma$ their
join (smallest common upper bound).

For $\gamma\in NC(n)$, a block $B=(i_{1},\dots,i_{k})$ of $\gamma$
will be called \emph{interior} if there exists another block
$D\in\gamma$ and $i,j\in D$ such that $i<i_{1},i_{2},\dots,i_{k}<j$.
A block will be called \emph{exterior} if is not interior. The set
of all interior, respectively exterior blocks of $\gamma$ will be
denoted by $\text{Int}(\gamma)$, respectively $\text{Ext}(\gamma)$. The set $ \text{Ext} ( \gamma) $ is totally ordered by the value of the first element in each block. 

For $ X_1, \dots, X_n \in A $, we define the free, respectively c-free, cumulants 
$ \kappa_n (X_1, \dots, X_n ) $, respectively $ {}^c\kappa_n ( X_1, \dots, X_n )  $ as the multilinear maps from $ A^n $ to $ \mathbb{C} $, respectively $ B $,  given by the recurrences below:
\begin{align*}
\varphi( X_1 \cdots X_n ) &
= 
\sum_{ \gamma \in \NC( n ) } 
\prod_{\substack{ C = \text{block in} \gamma \\ C = (i_1, \dots, i_l ) } }  \kappa_l (X_{ i_1}, \dots, X_{ i_l } )\\
\Phi ( X_1 \dots X_n ) & 
= 
 \sum_{ \gamma \in \NC( n ) } 
 [ 
\prod_{\substack{ B \in \text{Ext} (  \gamma ) \\  B  = ( j_1, \dots, j_l ) } } \ck_l (  X_{ j_1 } \cdots X_{ j_l } ) ]
 \cdot 
[ 
\prod_{\substack{ D  \in \text{Int} ( \gamma ) \\  D = (i_1, \dots, i_s ) } }  \kappa_s (X_{ i_1}, \dots, X_{ i_s } )
]
\end{align*}
with the convention that if $ \Lambda = \{ \alpha(1),\alpha(2), \dots, \alpha(n) \} $ is a totally ordered set and $ \{ X_{\lambda} \}_{\lambda \in \Lambda} $ is a collection of elements from $ B $, then 
\[
 \prod_{ \lambda \in \Lambda } X_\lambda = X_{ \alpha(1) } \cdot X_{ \alpha(2)}\cdots X_{ \alpha(n) }. 
 \]

We will us the shorthand notations $ \kappa_n ( X ) $  for $ \kappa_n ( X, \dots, X ) $ and 
$ \ck_n (X) $ for $ \ck_n ( X, \dots, X ) $.

As shown in \cite{ns}, \cite{vdn}, and in \cite{mp-commstoch}, if $ X_1 $ and $ X_2 $ are c-free, then 
\begin{align}
R_{ X_1 + X_2 }(z) & = 
R_{X_1} ( z ) + R_{X_2} ( z ) \\
\cR_{ X_1 + X_2} ( z ) & = 
\cR_{X_1} ( z ) + \cR_{X_2} ( z )
\end{align}
where, for $ X \in A $, we let
$ R_X ( z ) = \sum_{ n =1 }^\infty \kappa_n ( X ) z^n  $
 and 
$ \cR_X ( z ) = \sum_{ n = 1 }^\infty  \ck_n ( X ) z^n $. That is, the mixed free and c-free cumulants in $ X_1 $ and $ X_2 $ vanish.

We will prove next a result analogous to Theorem 14.4 from \cite{ns}, more precisely a lemma about c-free cumulants with products as entries, in fact an operator-valued version of Lemma 3.2 from \cite{mvp-jcw}.

The Kreweras complementary
  $ \text{Kr}(\pi)$ of $ \pi \in  NC(n)$
  is defined as follows (see \cite{ns}, \cite{kreweras}). Consider the symbols
   $\overline{1},\dots,\overline{n}$
   such that
   $1<\overline{1}<2<\dots<n<\overline{n}$.
   Then $Kr(\pi)$ is the
biggest element of
 $NC(\overline{1},\dots,\overline{n})\cong NC(n)$
such that $ \pi\cup \text{Kr}(\pi) $ is an element of $ NC(1,\overline{1},\dots,n,\overline{n}) $.
The total number of blocks in $\gamma$ and $\kr(\gamma)$ is $n+1$.

For $ \gamma \in NC(n) $  and $ p \in \{ 1, 2, \dots, n \} $, we will denote by
$ \gamma [ p ] $ the block of $ \gamma $ that contains $ p $.

\begin{lem} \label{lemma:1}
Suppose that $ X, Y $ are two c-free elements of $ A $. Then
\begin{enumerate}
\item[(i)] $ \displaystyle
\kappa_n (XY) 
= 
\sum_{ \gamma\in NC(n) }
 \prod_{ B \in \gamma } \kappa_{ | B | } (X) 
\cdot \prod_{ D \in \text{Kr}(\gamma) } \kappa_{ | D | } ( Y ) $
\item[(ii)]
$ \displaystyle
\ck_n (XY) 
= 
\sum_{ \gamma\in NC(n) }
\ck_{ | \gamma [ 1 ] | } ( X ) \cdot
 \ck_{ | \text{Kr} ( \gamma) [ \overline{n} ] | } ( Y )
  \cdot
 \prod_{ \substack{ B \in \gamma\\ B \neq \gamma[ 1 ] } }
  \kappa_{ | B | } (X) 
\cdot \prod_{\substack{ D \in \text{Kr}(\gamma) \\ D \neq \text{Kr}( \gamma)[ \overline{n}] } } \kappa_{ | D | } ( Y ) $
\end{enumerate}

\end{lem}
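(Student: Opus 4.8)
The plan is to read both identities off the same expansion: I regard $(XY)^n$ as a word of length $2n$ in which the odd positions $1,3,\dots,2n-1$ carry $X$ and the even positions $2,4,\dots,2n$ carry $Y$, and I write $\hat 1$ for the interval partition of $\{1,\dots,2n\}$ with blocks $\{2i-1,2i\}$. Under the order $1,\overline 1,2,\overline 2,\dots,n,\overline n$ used to define the Kreweras complement, the odd positions are the unbarred points and the even positions the barred points, so a monochromatic $\sigma\in NC(2n)$ is the same thing as a pair $(\sigma_X,\sigma_Y)\in NC(n)\times NC(n)$, and the standard Kreweras bijection (the one underlying the free $S$-transform, see \cite{ns}) says that such a $\sigma$ satisfies $\sigma\vee\hat 1=\mathbbm{1}_{2n}$ if and only if $\sigma_Y=\text{Kr}(\sigma_X)$.

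Part (i) involves only the scalar functional $\varphi$ and the scalar-valued free cumulants $\kappa$, so nothing operator-valued intervenes and it is the classical formula for the free cumulants of a product (see \cite{ns}). Concretely, I would expand $\kappa_n(XY,\dots,XY)$ by the free ``cumulants with products as entries'' formula as a sum over $\sigma\in NC(2n)$ with $\sigma\vee\hat 1=\mathbbm{1}_{2n}$; freeness of $X$ and $Y$ makes every mixed free cumulant vanish, so only monochromatic $\sigma$ survive, and by the bijection above these are exactly the $\sigma$ with $\sigma_Y=\text{Kr}(\sigma_X)$. Relabelling the odd and the even positions by $\{1,\dots,n\}$ turns $\sigma_X$ into a general $\gamma\in NC(n)$ and $\sigma_Y$ into $\text{Kr}(\gamma)$, yielding the stated sum.

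For part (ii) I would run the analogous expansion starting from $\ck_n(XY,\dots,XY)$: each $\sigma\in NC(2n)$ with $\sigma\vee\hat 1=\mathbbm{1}_{2n}$ contributes the ordered (by first element) product of the $B$-valued cumulants $\ck_{|B|}$ over the exterior blocks $B$ of $\sigma$, times the scalar product of $\kappa_{|D|}$ over the interior blocks $D$. Vanishing of the mixed free and c-free cumulants again leaves only the monochromatic $\sigma=\gamma\cup\text{Kr}(\gamma)$, and the decisive step is to identify their exterior blocks. I claim there are exactly two: the block through position $1$, which is the $X$-block $\gamma[1]$, and the block through position $2n$, which is the $Y$-block $\text{Kr}(\gamma)[\overline n]$. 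Indeed, the exterior blocks of any non-crossing partition have pairwise disjoint spans with no elements lying between consecutive ones, so consecutive exterior blocks occupy adjacent positions; for $\sigma\vee\hat 1=\mathbbm{1}_{2n}$ each such adjacency must be an $\hat 1$-pair $\{2k-1,2k\}$, forcing the left block to end at an odd ($X$) position and the right block to begin at an even ($Y$) position. The first exterior block contains $1$ and is therefore an $X$-block, so a second exterior block would be a $Y$-block; but a $Y$-block ends at an even position, which is never the left end of an $\hat 1$-pair, so no third exterior block can exist. Hence there are exactly two, appearing in the order $X$ then $Y$, and all other blocks are interior.

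I expect the ordering bookkeeping just described to be the main obstacle. In the scalar case the two distinguished factors commute, so one never has to decide which exterior block is which; here, because $B$ is non-commutative, I must show both that no further exterior blocks occur (so that no spurious $B$-valued factors appear) and that the $X$- and $Y$-exterior blocks are produced in the left-to-right order prescribed by the c-free moment-cumulant formula, giving precisely $\ck_{|\gamma[1]|}(X)\cdot\ck_{|\text{Kr}(\gamma)[\overline n]|}(Y)$, with the interior scalar cumulants $\kappa_{|B|}(X)$ and $\kappa_{|D|}(Y)$ commuting freely past them. A secondary technical point is to justify the c-free ``cumulants with products as entries'' expansion used above; I would obtain it by the same Möbius-inversion argument that proves the free version, or equivalently by matching the coarse expansion of $\Phi((XY)^n)$ over $NC(n)$ against the fine expansion over $NC(2n)$ and isolating the top term $\ck_n(XY)$ by induction on $n$.
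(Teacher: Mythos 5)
Your proposal is correct and follows essentially the same route as the paper: expand over $NC(2n)$, use the vanishing of mixed free and c-free cumulants to keep only the monochromatic partitions $\gamma\cup\mathrm{Kr}(\gamma)$ characterized by the join condition, and justify the c-free ``products as entries'' formula by matching the coarse expansion of $\Phi\bigl((XY)^n\bigr)$ over $NC(n)$ against the fine expansion over $NC(2n)$ and isolating the top term by induction on $n$, which is exactly the paper's argument. Your explicit parity argument showing that such a partition has precisely two exterior blocks, $\gamma[1]$ followed by $\mathrm{Kr}(\gamma)[\overline{n}]$, supplies a detail the paper leaves implicit.
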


\begin{proof}
Part (i) is shown in \cite{ns}, Theorem 14.4. We will show part (ii) by induction on $ n $. For $ n = 1 $, the statement is trivial, since $ \ck_2 ( X, Y ) = 0 $ from the c-freeness of $ X $  and $ Y $, therefore
$
 \ck_1 (XY) = \Phi ( X Y ) = \ck_1 (X) \ck_1 (Y ). 
$

For the inductive step, in order to simplify the writting,  we will introduce several new notations.

 Let $ NC_S ( n )=\{ \pi \in NC( n):$   elements from the same block of $ \pi $ have the same parity $\}$. For $\sigma\in NC_{S}( n)$, denote $\sigma_{+}$, respectively $\sigma_{-}$
the restriction of $\sigma$ to the even, respectively odd, numbers and define
 $
  NC_{0}(n)=\{\sigma:\sigma\in
NC(n),\sigma_{+}=Kr(\sigma_{-})\}.
 $
 
 Also, we will need to consider the mappings
   \[
NC(n)\times NC(m)\ni(\pi,\sigma)\mapsto\pi\oplus\sigma\in NC(m+n),
 \]
 the juxtaposition of partitions, and
  \[
NC(n)\ni\sigma\mapsto\widehat{\sigma}\in NC(2n)
 \]
 constructed by doubling the elements, that is if $ (i_1, i_2, \dots, i_s ) $
is a block of $ \sigma $, then 
$ (  2 i_1 -1, 2 i_1 , 2 i_2 -1, 2 i_2, \dots, 2 i_s - 1, 2 i_s ) $ 
is a block of  $ \widehat{\sigma} $.

Then, for $ \pi \in NC (n ) $, we define 
\begin{align*}
\kappa_\pi[ X_1, \dots, X_n ] 
& = 
\prod_{\substack{ C \in \pi \\ C = (i_1, \dots, i_l ) } }  \kappa_l (X_{ i_1}, \dots, X_{ i_l } )\\
\mK_\pi[ X_1, \dots, X_n ]
& = 
\prod_{\substack{ B \in \text{Ext} (  \gamma ) \\  B  = ( j_1, \dots, j_l ) } } \ck_l (  X_{ j_1 } \cdots X_{ j_l } ) ]
 \cdot 
[ 
\prod_{\substack{ D  \in \text{Int} ( \gamma ) \\  D = (i_1, \dots, i_s ) } }  \kappa_s (X_{ i_1}, \dots, X_{ i_s } )
\end{align*}

\noindent Remark that   $ \kappa_{ \pi \oplus \sigma } = \kappa_\pi \cdot \kappa_\sigma $ and 
 $ \mK_{ \pi \oplus \sigma } = \mK_\pi \cdot \mK_\sigma $. 
Also, if 
$ ( i_1, i_2, \dots, i_s )$ is an exterior block of $\pi$, 
then 
\[ \pi_{ | \{ i_1 + 1, \dots, i_s - 1 \} }  \cup \kr ( \pi_{ | \{ i_1 + 1, \dots, i_s - 1 \} } ) 
= \pi \cup \kr(\pi)_ { | \{ i_1 + 1, \dots, i_s - 1 \} },
\]
 so 
Lemma \ref{lemma:1} is equivalent to 
\begin{align*}
\kappa_{\pi}[XY,\dots,XY]& =\sum_{\substack{\sigma\in NC_{S}(2n)\\
\sigma\bigvee\widehat{0_{n}}=\widehat{\pi}}
}\kappa_{\sigma}[X,Y,\dots,X,Y]
\\
\mK_{\pi}[XY,\dots,XY]
 &=
 \sum_{\substack{\sigma\in NC_{S}(2n)\\
\sigma\bigvee\widehat{0_{n}}
 =\widehat{\pi}}
}\mK_{\sigma}[X,Y,\dots,X,Y],
\end{align*}
therefore\
$
\phi\left((XY)^{n}\right) = \ck_n (XY)+\sum_{\substack{\pi\in NC(n)\\
\pi\neq\mathbbm{1}_{n}} }\mK_{\pi}[XY,\dots,XY].
$

On the other hand,
 \begin{equation*}
\phi\left((XY)^{n}\right)=\phi(X\cdot Y\cdots X\cdot
Y)=\sum_{\sigma\in
NC(2n)}\mK_{\sigma}[X,Y,\dots,X,Y].
 \end{equation*}
 Since  the mixed cumulants vanish, the equation above becomes
  \begin{eqnarray*}
\phi\left((XY)^{n}\right) & = & \sum_{\sigma\in NC_{S}(2n)}\mK_{\sigma}[X,Y,\dots,X,Y]\\
 & = & \sum_{\sigma\in NC_{0}(2n)}\mK_{\sigma}[X,Y,\dots,X,Y]+\sum_{\substack{\sigma\in NC_{S}(2n)\\
\sigma\notin NC_{0}(2n)} }\mK_{\sigma}[X,Y,\dots,X,Y]
  \end{eqnarray*}

\noindent But
$NC_{S}(2n)=\bigcup_{\pi\in NC(n)}\{\sigma:\ \sigma\in
NC_{S}(2n),\sigma\bigvee\widehat{0_{n}}=\widehat{\pi}\}$,
  and, for
$\sigma\in NC_{s}(2n)$, one has that $\sigma\in NC_{0}(2n)$ if and
only if
 $\sigma\bigvee\widehat{0_{n}}=\mathbbm{1}_{2n}$,
  Therefore:
 \[
NC_{S}(2n)\setminus NC_{0}(2n)=\bigcup_{\substack{\pi\in NC(n)\\
\pi\neq\mathbbm{1}_{n}} }\{\sigma:\ \sigma\in
NC_{S}(2n),\sigma\bigvee\widehat{0_{n}}=\widehat{\pi}\},
 \]
henceforth
 \begin{eqnarray*}
\sum_{\substack{\sigma\in NC_{S}(2n)\\
\sigma\notin NC_{0}(2n)}
}\mK_{\sigma}[X,Y,\dots,X,Y]
 & = &
  \sum_{\substack{\pi\in NC(n)\\
\pi\neq\mathbbm{1}_{n}}
}\sum_{\substack{\sigma\in NC_{S}(2n)\\
\sigma\bigvee\widehat{0_{n}}=\widehat{\pi}}
}\mK_{\sigma}[X,Y,\dots,X,Y]\\
 & = &
  \sum_{\substack{\pi\in NC(n)\\
\pi\neq\mathbbm{1}_{n}} }\mK[XY,\dots,XY].
 \end{eqnarray*}
 so the proof is now complete.

\end{proof}

\subsection{Non-crossing linked partitions and $t$-coefficients} ${}$\\
 By a non-crossing linked partition $\pi$ of the ordered set
 $\{1,2,\dots,n\}$ we will understand a collection $B_1,\dots, B_k$
 of subsets of $\{1,2,\dots,n\}$, called blocks, with the following
 properties:
 \begin{enumerate}
 \item[(a)]$\displaystyle{\bigcup_{l=1}^kB_l=\{1,\dots,n\}}$
 \item[(b)]$B_1,\dots,B_k$ are non-crossing.
 \item[(c)] for any $1\leq l,s\leq k$, the intersection $B_l\bigcap
 B_s$ is either void or contains only one element. If
 $\{j\}=B_i\bigcap B_s$, then $|B_s|, |B_l|\geq 2$ and $j$ is the
 minimal element of only one of the blocks $B_l$ and $B_s$.
\end{enumerate}

 For $ \pi $ as above we define $s(\pi)$ to be the set of all $1\leq k\leq n$ such that there are no blocks of $\pi$ whose minimal element is $k$. A block $B=i_1<i_2<\dots <i_p$ of $\pi$ will be called \emph{exterior} if there is no other block
$D$ of $\pi$ containing two elements $l,s$ such that  $l\leq
i_1<i_p<s$. The set of all non-crossing linked partitions on $\{1,\dots, n\}$
 will be denoted by $NCL(n)$.

\textbf{Example 2}: Below is represented graphically the
non-crossing linked
partition
 $\pi=(1,4,6,9), (2,3), (4,5),(6,7,8),(10,11),
(11,12)$  from $ NCL(12)$. Its exterior blocks are $(1, 4, 6, 9)$ and $(10, 11)$.

 \setlength{\unitlength}{.13cm}
 \begin{equation*}
 \begin{picture}(10,8)

\put(-16,3){\circle*{1}}

\put(-12,3){\circle*{1}} \put(-12,3){\line(0,1){3}}

\put(-8,3){\circle*{1}}\put(-8,3){\line(0,1){3}}

\put(-4,3){\circle*{1}}

\put(0,3){\circle*{1}}\put(0,3){\line(0,1){3}}

\put(4,3){\circle*{1}}

\put(8,3){\circle*{1}}\put(8,3){\line(0,1){3}}

\put(12,3){\circle*{1}}\put(12,3){\line(0,1){3}}

\put(16,3){\circle*{1}}

\put(20,3){\circle*{1}}

\put(24,3){\circle*{1}}

\put(28,3){\circle*{1}}\put(28,3){\line(0,1){3}}

\put(-12,6){\line(1,0){4}}

\put(8,6){\line(1,0){4}}

\put(-4,3){\line(4,3){4}}

\put(4,3){\line(4,3){4}}

\put(24,3){\line(4,3){4}}


\put(-16.5,0.2){1}

\put(-12.5,0.2){2}

\put(-8.5,0.2){3}

\put(-4.5,0.2){4}

\put(-0.5,0.2){5}

\put(3.5,0.2){6}

\put(7.5,0.2){7}

\put(11.5,0.2){8}

\put(15.5,0.2){9}

\put(19,0.2){10}

\put(23,0.2){11}

\put(27,0.2){12}

\linethickness{.55mm}

\put(-16,3){\line(0,1){5}}

\put(-16,8){\line(1,0){32}}

\put(20,8){\line(1,0){4}}

\put(20,3){\line(0,1){5.1}}

\put(24,3){\line(0,1){5.1}}

\put(16,3){\line(0,1){5}}

\put(4,3){\line(0,1){5.1}}

\put(-4,3){\line(0,1){5}}

 \end{picture}
 \end{equation*}

 Similarly to \cite{dykema} and \cite{mp}, we define the $ t $-coefficients, respectively $ \ct $-coefficients, as follows. Take $ A^{\circ} = A \setminus  \ker \varphi $. Then, for  $ n $ a positive integer, the maps
  $ t_n : A \times ( A^\circ )^n \lra \mathbb{C} $ and 
$ {}^c t_n : A \times (A^\circ)^n \lra  B $  are given by the following recurrences:
\begin{equation}\label{eq:31}
\varphi( X_1\cdots X_n )
=
\sum_{ \pi\in \NCL ( n ) }  [  \prod_{\substack{  B \in \pi \\  B = ( i_1, \dots, i_l )  } }
 t_{ l-1 } ( X_{ i_1} \, \dots, X_{ i_l } ) 
\cdot \prod_{ p \in s( \pi ) } t_0 ( p ) ]\\
\end{equation}
respectively
\begin{eqnarray}\label{eq:32}
\Phi ( X_1 \cdots X_n ) 
&  = &
\sum_{ \pi \in \NCL(n) } [ \prod_{ \substack{ B \in \text{Ext}( \pi ) \\  B = ( i_1, \dots, i_l )  } } 
\ct_{ l-1 } ( X_{ i_1} \, \dots, X_{ i_l } ) \label{eq:41}\\
&&
\hspace{1cm}\cdot \prod_{ \substack{ D \in \text{Int} ( \pi ) \\ D = ( j_1, \dots , j_s ) } } 
t_{ s-1 } ( X_{ j_1} , \dots, X_{ j_s } ) 
\cdot 
\prod_{ p \in s(\pi ) } t_0 ( X_p ). \nonumber
\end{eqnarray}

To simplify the writing we will use the shorthand notations
 $ t_\pi[X_1,\dots, X_n] $,
respectively
$\ct_\pi[X_1,\dots, X_n]  $ 
for the summing term of the right-hand side of (\ref{eq:31}),  respectively (\ref{eq:41}); 
also, if $ X_1 = \dots = X_n= X$, we will use the shorter notations $ t_\pi[X] $,  $ \ct_\pi[X] $ respectively $t_n(X)$,  $ \ct_n(X)$.
Note that all the factors in $ t_\pi $ are $ t $-coefficients, but
 the development of $ \ct_\pi $ contains both $ \ct $- and $ t $-coefficients.

 \subsection{The lattice $NCL(n)$ and c-freeness in terms on $ t $-coefficients}

 On the set $NCL(n)$ we define a order
 relation by saying that $\pi\su \sigma$ if for any block $B$
 of $\pi$ there exist $D_1,\dots, D_s$ blocks of $\sigma$ such that
 $\displaystyle {B=D_1\cup\dots\cup D_s}$.
  With respect to the order relation $\su$, the set $NCL(n)$ is a lattice that contains $NC(n)$.

We say that $i$ and $j$ are \emph{connected }in $\pi\in NCL(n)$ if there
 exist $B_1,\dots,B_s$ blocks of $\pi$ such that $i\in B_1$, $j\in
 B_s$ and $B_k\cap B_{k+1}\neq \varnothing$, $1\leq k\leq s-1$.

 For every $\pi\in NCL(n)$ there exist a unique partition $c(\pi)\in
 NC(n)$ defined as follows: $i$ and $j$ are in the same block of
 $c(\pi)$ if and only if they are connected in $\pi$. 
  We
 will use the notation
 \[
 [c(\pi)]=\{\sigma\in NCL(n):
 c(\sigma)=c(\pi)\}.
 \]

 \noindent In  Example 2 from above, we have 
 $c(\pi)=(1, 4, 5, 6, 7, 8, 9), (2, 3),  (10, 11, 12)$ for
 $ \pi = (1, 4, 6, 9), (2, 3), (4, 5), (6, 7, 8), (10, 11), (11, 12) $.

 For every $\gamma \in NC(n)$, the set $[\gamma]$ is a sublattice of
  $NCL(n)$ with maximal element $\gamma$. Moreover,
 if $\gamma$ has the blocks $B_1,\dots, B_s$, each $B_l$ of cardinality
  $k_l$, then  we have the following ordered set
  isomorphism:

  \begin{equation}\label{factorization}
[c(\pi)] \simeq
[\mathbbm{1}_{k_1}]\times\cdots\times[\mathbbm{1}_{k_s}]
  \end{equation}

\begin{prop}\label{cum-tcoeff}
For any positive integer $n$ and any $X_1,\dots, X_n\in A $ we have that
\begin{align*}
\kappa_n(X_1,\dots,X_n)
&
=\sum_{\pi\in[\mathbbm{1}_n]}t_\pi[X_1,\dots, X_n]\\
\ck_n(X_1,\dots,X_n)
&
=\sum_{\pi\in[\mathbbm{1}_n]}\ct_\pi[X_1,\dots, X_n]
\end{align*}
\end{prop}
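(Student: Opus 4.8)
The plan is to prove both identities simultaneously by induction on $n$, exploiting the fact that the defining recurrences for the cumulants and for the $t$- and $\ct$-coefficients expand the \emph{same} functionals $\varphi$ and $\Phi$; the only work is to match them term by term over $NC(n)$. Write $\widetilde{\kappa}_n := \sum_{\pi\in[\mathbbm{1}_n]}t_\pi$ and $\widetilde{\ck}_n := \sum_{\pi\in[\mathbbm{1}_n]}\ct_\pi$ for the two right-hand sides; the goal is $\widetilde{\kappa}_n=\kappa_n$ and $\widetilde{\ck}_n=\ck_n$. For $n=1$ both reduce to $t_0(X_1)=\varphi(X_1)$ and $\ct_0(X_1)=\Phi(X_1)$, which are the $n=1$ instances of (\ref{eq:31}) and (\ref{eq:41}), so the base case is immediate.

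First I would handle the scalar identity. Starting from (\ref{eq:31}), group the sum over $\NCL(n)$ according to the value $\gamma=c(\pi)\in NC(n)$, using $\NCL(n)=\bigsqcup_{\gamma\in NC(n)}[\gamma]$. The key structural input is the factorization (\ref{factorization}): if $\gamma$ has blocks $B_1,\dots,B_s$ with $|B_l|=k_l$, then restricting $\pi\in[\gamma]$ to its connected components gives an isomorphism $[\gamma]\simeq\prod_l[\mathbbm{1}_{k_l}]$. Since every block of $\pi$ and every $p\in s(\pi)$ lies in a single connected component, the product $t_\pi$ factors as the product of the local $t_{\pi_{|B_l}}$, whence $\sum_{\pi\in[\gamma]}t_\pi=\prod_l\widetilde{\kappa}_{k_l}$. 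Substituting back yields $\varphi(X_1\cdots X_n)=\sum_{\gamma\in NC(n)}\prod_{B\in\gamma}\widetilde{\kappa}_{|B|}$, which is exactly the recurrence defining $\kappa_n$; isolating the $\gamma=\mathbbm{1}_n$ term and applying the induction hypothesis to the remaining (strictly smaller) blocks forces $\widetilde{\kappa}_n=\kappa_n$.

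The c-free identity follows the same template from (\ref{eq:41}), but now the bookkeeping of exterior versus interior blocks, together with the non-commutative ordering, is the delicate point. The combinatorial lemma I would establish is that, for $\pi\in[\gamma]$, a block of $\pi$ sitting in a component $B_l$ is exterior in $\pi$ if and only if $B_l\in\text{Ext}(\gamma)$ and that block is exterior in the local restriction $\pi_{|B_l}$, while every block inside an interior component of $\gamma$ becomes interior in $\pi$ (this uses that distinct exterior components of $\gamma$ have disjoint, linearly ordered spans, so no block of one component can straddle a block of another). Granting this, $\ct_\pi$ factors as $\prod_{B_l\in\text{Ext}(\gamma)}\ct_{\pi_{|B_l}}\cdot\prod_{D_l\in\text{Int}(\gamma)}t_{\pi_{|D_l}}$, where the operator-valued factors over exterior components are multiplied in the order of $\text{Ext}(\gamma)$ and the scalar $t$-factors are central, hence freely movable. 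Summing over $[\gamma]\simeq\prod_l[\mathbbm{1}_{k_l}]$ and invoking the already proved scalar identity for the interior components gives $\sum_{\pi\in[\gamma]}\ct_\pi=\prod_{B\in\text{Ext}(\gamma)}\widetilde{\ck}_{|B|}\cdot\prod_{D\in\text{Int}(\gamma)}\kappa_{|D|}$. This is precisely $\mK_\gamma$ with $\widetilde{\ck}$ in place of $\ck$, so summing over $\gamma$ reproduces (\ref{eq:41}), and the same isolation of the $\gamma=\mathbbm{1}_n$ term closes the induction.

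The main obstacle is this combinatorial lemma and, inside it, the matching of the non-commutative orderings: one must verify not only that the exterior blocks of $\pi$ are accounted for by the exterior components of $\gamma$, but that the total order on $\text{Ext}(\pi)$ (by smallest element) refines into the order on $\text{Ext}(\gamma)$ followed by the internal order on each $\text{Ext}(\pi_{|B_l})$, so that the factorization of $\ct_\pi$ respects the order in which the $B$-valued factors are multiplied. The edge cases in the definition of ``interior'' for linked partitions, where a one-element block is nested inside another block, also require care, since such blocks must contribute scalar $t_0$-factors rather than operator-valued $\ct_0$-factors; getting this classification right is what makes the operator product collapse onto $\mK_\gamma$ exactly.
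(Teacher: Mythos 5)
Your proposal is correct and follows essentially the same route as the paper: group the sum over $NCL(n)$ by the connected closure $c(\pi)$, factor $t_\pi$ and $\ct_\pi$ over the blocks of $c(\pi)$ using the isomorphism $[\gamma]\simeq[\mathbbm{1}_{k_1}]\times\cdots\times[\mathbbm{1}_{k_s}]$, compare with the moment-cumulant recurrences, and induct on $n$. The only differences are that the paper cites \cite{mp} for the scalar identity rather than reproving it, and leaves implicit the exterior/interior matching and ordering of the $B$-valued factors that you correctly spell out.
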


\begin{proof}
First relation is shown in Proposition 1.4 from \cite{mp}. 
The second relation is trivial for $ n = 1 $. For $ n > 1 $, note that 
\[
\sum_{\pi\in NCL(n)} \ct_\pi[X_1,\dots,X_n]
=\sum_{\gamma\in NC(n)}\sum_{\pi\in[\gamma]}
\ct_\gamma[X_1,\dots,X_n]
\]
and the similar relation for $ t_\pi $.

Suppose that $ \pi \in NCL(n) $ and $ \gamma \in NC(n) $ are such that $ \pi \in [ \gamma ] $.
From the definition of $ \ct $- and $ t $-coefficients we have that
\begin{equation}\label{eq:24}
\ct_\pi [ X_1, \cdots, X_n ]
=
\prod_{ \substack{  B \in \text{Ext} ( \pi )\\ B = ( i_1, \dots, i_s )  } }
\ct_{ \pi_{ | B } } [ X_{ i_1 }, \dots, X_{ i_s } ] 
\cdot 
\prod_{ \substack{  D \in \text{Int} ( \pi )\\ D = ( j_1, \dots, j_t )  } }
 t_{ \pi_{ | D } } [ X_{ j_1 }, \dots, X_{ j_t } ] 
\end{equation}
and 
 \begin{equation}\label{eq:25}
 t_\pi[X_1,\dots,X_n] = 
\prod_{ \substack{  B \in  \pi )\\ B = ( i_1, \dots, i_s )  } }
\ct_{ \pi_{ | B } } [ X_{ i_1 }, \dots, X_{ i_s } ] .
 \end{equation}

Therefore, the equation (\ref{eq:32}) becomes
\begin{align*}
\Phi( X_1 \cdots X_n ) 
= & \sum_{ \gamma \in NC(n) } 
[
 \prod_{ \substack { B \in \text{Ext}(\pi ) \\ B =  (  i_1, \dots, i_s ) } } 
(
\sum_{ \substack{ \pi \in NCL(n) \\ \pi \in [ \gamma ] } }
\ct_{ \pi_{ | B } } [ X_{ i_1 }, \dots, X_{ i_s } ] 
)
\\
 & 
\cdot
\prod_{ \substack { D \in \text{Int}(\pi ) \\ D =  (  j_1, \dots, j_l ) } } 
(
\sum_{ \substack{ \pi \in NCL(n) \\ \pi \in [ \gamma ] } }
t_{ \pi_{ | D } } [ X_{ j_1 }, \dots, X_{ j_l } ] 
)
]
\end{align*}
 and the factorization (\ref{factorization}) gives:
\begin{align*}
\Phi( X_1 \cdots X_n ) 
= & \sum_{ \gamma \in NC(n) } 
[
 \prod_{ \substack { B \in \text{Ext}(\pi ) \\ B =  (  i_1, \dots, i_s ) } } 
(
\sum_{\sigma\in[\mathbbm{1}_s] }
\ct_{ \sigma } [ X_{ i_1 }, \dots, X_{ i_s } ] 
)
\\
 & 
\cdot
\prod_{ \substack { D \in \text{Int}(\pi ) \\ D =  (  j_1, \dots, j_l ) } } 
(
\sum_{ \sigma\in[\mathbbm{1}_l]  }
t_{ \sigma } [ X_{ j_1 }, \dots, X_{ j_l } ] 
)
]
\end{align*}

The conclusion follows now utilizing the moment-cumulant recurrence and induction on $n$.

\end{proof}

For $ X \in A^\circ $, we define the formal power series
 $  T_X ( z ) = \sum_{ n = 0 }^\infty t_n ( X ) z^n $
 and
 $  {}^cT_X ( z ) = \sum_{ n = 0 }^\infty \ct_n ( X ) z^n $. Also,  we consider the moment series of $ X $, namely $ M_X(z)= \sum_{n=1}^\infty \Phi(X^n) z^n $, and $ m_X(z) = \sum_{ n =1}^\infty \varphi(X^n) z^n $. 
  As shown in \cite{mvp-jcw}, Lemma 7.1(i), the recurrence \ref{eq:31} gives that
  \begin{equation}\label{eq:38}
  T_X(m_X(z)) \cdot ( 1 + m_X(z)) =\frac{1}{z} m_X(z ).
  \end{equation}
The proposition below gives an analogous relation for the series $ \cT_X(z) $ (i. e. a non-commutative analogue of Lemma 7.1(ii) from \cite{mvp-jcw}).

\begin{prop}\label{analytic-ct}
 With the notations above, we have that
 \begin{equation}\label{eq:39}
 \cT_X (m_X(z) ) \cdot ( 1 + M_X( z ) ) = 
 \frac{1}{z} M_X(z).
 \end{equation}
\end{prop}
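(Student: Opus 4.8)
The plan is to extract the coefficient of $z^N$ from each side and match the two expansions combinatorially, reading the left-hand side through the $\ct$-coefficient recurrence (\ref{eq:32}). Writing $\frac1z M_X(z)=\sum_{N\ge0}\Phi(X^{N+1})z^N$ and expanding $\Phi(X^{N+1})=\sum_{\pi\in\NCL(N+1)}\ct_\pi[X]$, I would decompose each $\pi$ according to the connected component $K=c(\pi)[1]$ of the first node. Since $1$ is the global minimum, $K$ is an exterior (non-nested) component of $c(\pi)$ and the block $B_1$ of $\pi$ containing $1$ is the unique exterior block sitting in $K$; writing $B_1=\{1=p_0<p_1<\dots<p_k\}$, it contributes the factor $\ct_k(X)$, and this is the left-most factor in $\prod_{\text{Ext}(\pi)}\ct$ because $p_0=1$ minimizes the order on $\text{Ext}(\pi)$.

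Next I would split $\{1,\dots,N+1\}$ into the head $[1,M_1]$, with $M_1=\max K$, and the tail $[M_1+1,N+1]$. The structural point is that inside the head $B_1$ is the \emph{only} exterior block: every other block is either nested in a gap of $B_1$ or linked to one of the interior nodes $p_1,\dots,p_k$, and in both cases it is interior and therefore carries a scalar $t$-coefficient. Hence $\ct_\pi[X]$ factors as $\ct_k(X)$ times a scalar coming from the head interior times the $\ct$-weight of the tail; the scalar factor commutes through, and since every node of the tail exceeds $M_1$ its exterior blocks all follow $B_1$ in the order, so the $B$-valued factors retain the order $\ct_k(X)\cdot(\text{tail weight})$. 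Because no block can link across $M_1$ (such a block would connect the tail to $K$), the tail is a genuine element of $\NCL$ on $[M_1+1,N+1]$ and, summed over all choices, its $\ct$-weight produces $1+M_X(z)$.

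It then remains to sum the head's scalar factor, and here I would invoke exactly the mechanism behind the scalar identity (\ref{eq:38}): the non-crossing structure distributes the head interior into $k$ independent pieces, one for each of the $k$ gaps between consecutive nodes of $B_1$, each piece being an arbitrary scalar $\NCL$-configuration whose total $t$-weight (including the $t_0$ attached to a bare node) sums to $m_X(z)$. This is the content of the factorization (\ref{factorization}) together with the scalar recurrence (\ref{eq:31}), and it is the same computation that yields $m_X(z)^k$ in (\ref{eq:38}). Collecting the three factors gives $\Phi(X^{N+1})=[z^N]\sum_{k\ge0}\ct_k(X)\,m_X(z)^k\,(1+M_X(z))$, which is (\ref{eq:39}).

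I expect the main obstacle to be the bookkeeping forced by the linked structure, precisely where this argument departs from the commutative one. One must check carefully that within the component of $1$ only the leading block $B_1$ is exterior, so that a block linked to a non-minimal node of $B_1$ is counted as interior and feeds the scalar series $m_X$ rather than the operator series $M_X$, and that cutting the tail after the entire component (not merely after $B_1$) leaves it clean. Coupled to this is the need to control the non-commutative order of the $B$-valued factors: the argument must keep $\ct_k(X)$ to the left of the tail weight $1+M_X(z)$, with the scalar $m_X(z)^k$ commuting freely between them, which is exactly what dictates the one-sided form of (\ref{eq:39}).
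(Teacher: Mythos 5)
Your argument is correct and is essentially the paper's own proof: the paper likewise splits each $\pi\in NCL(N+1)$ at the maximum $p$ of the connected component of $1$, so that the head lies in $NCL(1,p)$ (a single exterior block $(1,i_1,\dots,i_{q-1})$ contributing $\ct_{q-1}(X)$, times $q-1$ independent scalar $NCL$-sums giving the coefficient of $m_X(z)^{q-1}$) while the tail sums to $1+M_X(z)$, with the same ordering of the $B$-valued factors. The one imprecision is your dichotomy for the head's interior blocks (``nested in a gap of $B_1$ or linked to a node of $B_1$''), which omits chains of linked blocks and blocks linked at $\max B_1$ that push $\max K$ past $\max B_1$; the paper absorbs exactly this bookkeeping into the segments $S(l)$ determined by the maximal element $i_l'$ connected to each node $i_l$, and it does not affect your conclusion.
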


\begin{proof}

 Let $ NCL(1, p) = \{ \pi \in NCP(p) : \pi \  \text{has only one exterior block} \}$.
 
 Note that for each $ \tau \in NCL(n) $, there exists a unique triple $ p \leq n $, $ \pi \in NCL(1, p ) $ and $ \sigma \in NCL(n-p) $ such that
 $ \tau = \pi \oplus \sigma $. 
  Indeed, taking $ p $ to be the maximal element of the block of $ c(\tau) $ containing 1, it follows that 
   \[ \tau = \tau_{ | \{ 1, 2, \dots, p \}} \oplus
   \tau_{ | \{ p+1, \dots, n \}} \]
   and that $ \tau_{ | \{ 1, 2, \dots, p \}} \in NCL(1, p) $.
   
   Conversely, each triple $ p, \pi, \sigma $ as above determine a unique $ \pi \oplus \sigma \in NCL(n) $, hence
   \[ 
    NCL(n) = \bigcup_{ p \leq n } \{ \pi \oplus \sigma : \ \pi \in NCL(1, p), \sigma \in NCL( n - p ) \}
    \]
   therefore the recurrence \ref{eq:32} gives
   \begin{align}
   \Phi(X^n) &= \sum_{ \pi \in NCL(n)} \ct_pi(X) \nonumber\\
    & =
   \sum_{ p \leq n } [ \sum_{ \pi \in NCL(p)} \ct_\pi(X) \cdot ( \sum_{ \sigma \in NCL(n-p)} \ct_\sigma(X))] \nonumber \\
   & = \sum_{ p \leq n } [ \sum_{ \pi \in NCL(p)} \ct_\pi(X) \cdot \Phi(X^{ n - p}) ]\label{new:01}.
   \end{align}

   Let $ NCL (1, q, p ) = \{ \pi \in NCL(1, p) :  \
   \text{  the exterior block of $ \pi $ has exactly $ q $ elements} \} $.
   Fix $ \pi \in NCL(1, q, p) $ and let $ (1, i_1, i_2, \dots, i_{q-1} )$ be the exterior block of $ \pi $. Define $ \widetilde{\pi} \in NCL(p-1)$ such that, with the notations following the recurrences (\ref{eq:31})--(\ref{eq:32}),  
   $ \ct_\pi(X)= \ct_{ q-1}\cdot t_{ \widetilde{\pi}}(X) $,  as follows:
   \begin{enumerate}
   \item[-] if  $ (j_1, j_2, \dots, j_s ) $ is an interior block of $ \pi $, then $(j_1 -1, j_2-1, \dots, j_s-1 ) $ is a block in $ \widetilde{\pi} $;
   \item[-] if $ j >1 $ and  the only block of $ \pi $ containing $ j $ is exterior, then $ ( j-1) $ is a block of $ \widetilde{\pi} $
   \end{enumerate}
   i.e. $ \widetilde{\pi} $ is obtaining by `` deleting '' the 1 and the exterior block of $ \pi $. 
    For each $ i_l $ in the exterior block $(1, i_1, i_2, \dots, i_{q-1})$ of $ \pi $ we define $ i_l^\prime $ to be the maxinal element connected to $ i_l $ in $ \pi $, and let $ i_0^\prime =0$.
    Then each set
     $ S(l) = \{ i_{l-1}^\prime +1, i_{l-1}^\prime +2, \dots, i_l^\prime\} $ is nonvoid  and we have the decomposition
     $ \widetilde{\pi } = \widetilde{\pi }_{ | S(1)} \oplus \widetilde{\pi }_{ | S(2)}
     \oplus \dots \oplus \widetilde{\pi }_{ | S(q-1)} .$
   
   \smallskip
   
   \noindent \textbf{Example 3.} If
    $ \pi = \{(1, 4, 6, 9), (2, 3), (4, 5), (6, 7, 8)\} $,
     then  
   \[
    \widetilde{\pi}=\{ (1, 2), (3, 4), (5, 6, 7), (8) \} = \{ (1, 2), (3, 4)\} \oplus \{ (1, 2, 3)\} \oplus \{ (1)\},\]
    see the diagram below:
  
    \setlength{\unitlength}{.13cm}
    \begin{equation*}
    \begin{picture}(24,4)

         \put(20,0){\circle*{1}} 
         \put(20,0){\line(0,1){3}}

         \put(24,0){\circle*{1}}
         \put(24,0){\line(0,1){3}}
         
         \put(20,3){\line(1,0){4}}
         
         \put(28,0){\circle*{1}}
         \put(28,0){\line(0,1){3}}
         \put(28, 3){\line(1,0){4}}
         
         \put(32,0){\circle*{1}}
         \put(32,0){\line(0,1){3}}
         
         \put(33, 1){ $\oplus$}
         
         \put(38,0){\circle*{1}}
         \put(38,0){\line(0,1){3}}
         \put(38,3){\line(1,0){8}}
         
         \put(42,0){\circle*{1}}
         \put(42,0){\line(0,1){3}}
         
         \put(46,0){\circle*{1}}
         \put(46,0){\line(0,1){3}}
         
         \put(47,1){ $\oplus$}
         
         \put(52,0){\circle*{1}}
         
         \put(52,0){\line(0,1){3}}

         \put(4,3){\line(1,0){4}}
         
         \put(-8,0){\line(4,3){4}}
         
         \put(0,0){\line(4,3){4}}


   \put(-20,0){\circle*{1}}

   \put(-16,0){\circle*{1}}
    \put(-16,0){\line(0,1){3}}

   \put(-12,0){\circle*{1}}
   \put(-12,0){\line(0,1){3}}
   
   \put(-8,0){\circle*{1}}
   
   \put(-4,0){\circle*{1}}
   \put(-4,0){\line(0,1){3}}
   
   \put(0,0){\circle*{1}}
   
   \put(4,0){\circle*{1}}
   \put(4,0){\line(0,1){3}}
   
   \put(8,0){\circle*{1}}
   \put(8,0){\line(0,1){3}}
   
   \put(12,0){\circle*{1}}

   \put(-16,3){\line(1,0){4}}
   
   \put(4,3){\line(1,0){4}}
   
   \put(-8,0){\line(4,3){4}}
   
   \put(0,0){\line(4,3){4}}


   \linethickness{.55mm}
   
   \put(-20,0){\line(0,1){5}}
   
   \put(-20,5){\line(1,0){32}}

   \put(12,0){\line(0,1){5}}
   
   \put(0,0){\line(0,1){5}}
   
   \put(-8,0){\line(0,1){5}}
   
   
   \put(14,1 ){$\longrightarrow$}

   
      \end{picture}
       \end{equation*}
    
    \qquad
    
 Using the equality $ \ct_\pi(X)= \ct_{ q-1}\cdot t_{ \widetilde{\pi}}(X) $, we obtain
 \begin{align}
 \sum_{ \pi \in NCL(1, q, p)} \ct_\pi(X) 
 &= 
 \ct_{ q-1}(X)  \cdot 
 \sum_{ r_1 + \dots r_{ q-1}=p }
 ( \prod_{k=1}^{q-1} \sum_{ \sigma\in NCL(r_k)} t_{\sigma}(X) )\nonumber\\
 &=
 \ct_{ q-1}(X)\sum_{ r_1 + \dots r_{ q-1}=p }
 ( \prod_{k=1}^{q-1} \varphi(X^{r_k}). )\label{new:02}
 \end{align}
  
   Since $ \displaystyle  NCL(1, p) = \bigcup_{q=1}^p NCL(1, q, p) $, the equations (\ref{new:01}) and (\ref{new:02}) give that
  \[
  \Phi(X^n) = \sum_{p=1}^n ( \sum_{ q=1}^p \ct_{q-1}(X) \cdot\sum_{ r_1 + \dots r_{ q-1}=p }
   ( \prod_{k=1}^{q-1} \varphi(X^{r_k}),
  \]
  which is the relation of the left hand side and right hand side coefficients of $ z^{n-1} $ in equation  (\ref{eq:39}).
\end{proof}

We conclude this section with the following result.
\begin{prop}\label{vanishtcoeff}\emph{(Characterization of c-freeness in terms of ${}^ct$-coefficients)}\\
Two elements $ X,  Y $ from $ A^\circ $ are c-free if and only if all their mixed $ t $- and $ \ct $-coefficients vanish, that is
for all $  n $ and all $ a_1, \dots, a_n \in \{ X,  Y \} $ such that $ a_k = X $ and $ a_l = Y $ for some $ k , l $ 
we have that
\[
\ct_{ n -1}( a_1, \dots, a_n ) = t_{n-1} ( a_1, \dots, a_n ) = 0. 
\]
\end{prop}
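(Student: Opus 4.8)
The plan is to route both implications through the combinatorial characterization of c-freeness recalled after Definition \ref{def:1}: $X$ and $Y$ are c-free if and only if all their mixed free cumulants $\kappa_n(a_1,\dots,a_n)$ and mixed c-free cumulants $\ck_n(a_1,\dots,a_n)$ vanish, where ``mixed'' means $a_j\in\{X,Y\}$ with both letters occurring. It therefore suffices to prove that the vanishing of all mixed $t$- and $\ct$-coefficients is equivalent to the vanishing of all mixed free and c-free cumulants, and for this the main tool is Proposition \ref{cum-tcoeff}, which writes $\kappa_n=\sum_{\pi\in[\mathbbm{1}_n]}t_\pi$ and $\ck_n=\sum_{\pi\in[\mathbbm{1}_n]}\ct_\pi$. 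Two bookkeeping facts are used throughout: the maximal partition $\mathbbm{1}_n\in[\mathbbm{1}_n]$ contributes the single-block terms $t_{\mathbbm{1}_n}[a_1,\dots,a_n]=t_{n-1}(a_1,\dots,a_n)\prod_{p=2}^n\varphi(a_p)$ and $\ct_{\mathbbm{1}_n}[a_1,\dots,a_n]=\ct_{n-1}(a_1,\dots,a_n)\prod_{p=2}^n\varphi(a_p)$ (recall $t_0(a_p)=\varphi(a_p)$), while every other $\pi\in[\mathbbm{1}_n]$ has all blocks of size strictly less than $n$. Since the entries lie in $A^\circ$, the scalar $\prod_{p=2}^n\varphi(a_p)$ is nonzero, so $t_{n-1}$ (resp. $\ct_{n-1}$) vanishes precisely when its single-block term does.

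The combinatorial heart of the argument is a lemma I would isolate first: if $\pi\in[\mathbbm{1}_n]$ and the entries are mixed, then $\pi$ has at least one \emph{mixed} block, i.e. a block of size $\geq 2$ containing both an $X$-position and a $Y$-position. Indeed, were every block monochromatic, then any two blocks sharing an element would carry the same letter there, hence the same color; as $c(\pi)=\mathbbm{1}_n$ means any two blocks are joined by a chain of pairwise intersecting blocks, all blocks and therefore all positions would receive a single color, contradicting mixedness.

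Granting the lemma, the reverse implication is immediate and needs no induction. Assuming all mixed $t$- and $\ct$-coefficients vanish, fix a mixed tuple. In $\kappa_n=\sum_{\pi\in[\mathbbm{1}_n]}t_\pi$ each $\pi$ has a mixed block $B$, contributing a factor $t_{|B|-1}$ at a mixed subtuple; this is a mixed $t$-coefficient of positive order and so vanishes, forcing $t_\pi=0$ and $\kappa_n=0$. The same reasoning applies to $\ck_n=\sum_{\pi\in[\mathbbm{1}_n]}\ct_\pi$, whose summands factor into $\ct$-coefficients on exterior blocks, $t$-coefficients on interior blocks, and scalars $\varphi(a_p)$: the mixed block contributes a vanishing $\ct_{|B|-1}$ or $t_{|B|-1}$ according as it is exterior or interior, so $\ct_\pi=0$ and $\ck_n=0$. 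All mixed cumulants vanishing, $X$ and $Y$ are c-free.

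For the forward implication I would induct on $n$, using that c-freeness already gives $\kappa_n(a_1,\dots,a_n)=\ck_n(a_1,\dots,a_n)=0$ for every mixed tuple. The base case $n=2$ holds because $[\mathbbm{1}_2]=\{(1,2)\}$, so $\kappa_2(a_1,a_2)=t_1(a_1,a_2)\varphi(a_2)$ and $\ck_2(a_1,a_2)=\ct_1(a_1,a_2)\varphi(a_2)$ with $\varphi(a_2)\neq 0$. For the inductive step, assume all mixed $t_m$ and $\ct_m$ with fewer than $n$ entries vanish; by the lemma and the size bound, every $\pi\neq\mathbbm{1}_n$ in $[\mathbbm{1}_n]$ has a mixed block of size $<n$, whence $t_\pi$ (resp. $\ct_\pi$) carries a vanishing lower-order $t$- (resp. $t$- or $\ct$-) factor supplied by the inductive hypothesis. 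Thus $\kappa_n=t_{n-1}\prod_{p\geq 2}\varphi(a_p)$ and $\ck_n=\ct_{n-1}\prod_{p\geq 2}\varphi(a_p)$, and since the left sides vanish while the scalar is invertible, $t_{n-1}$ and $\ct_{n-1}$ vanish, closing the induction. I expect the main obstacle to be the clean isolation in this last step: confirming through the exterior/interior decomposition that $\mathbbm{1}_n$ contributes exactly $\ct_{n-1}$ up to the nonzero scalar $\prod_{p\geq2}\varphi(a_p)$, while the lemma annihilates every other connected linked partition by a single mixed factor of strictly smaller order, so that the free and c-free families can be handled at the same level without circularity.
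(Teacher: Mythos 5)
Your proposal is correct and takes essentially the same route as the paper's own proof: both reduce c-freeness to the vanishing of mixed free and c-free cumulants and then apply Proposition \ref{cum-tcoeff}, using the connectivity of $\pi\in[\mathbbm{1}_n]$ to produce a mixed block of size $<n$ that annihilates every summand except the one coming from $\mathbbm{1}_n$ itself. Your explicit proof of the mixed-block lemma and your careful tracking of the nonzero scalars $\prod_{p}\varphi(a_p)$ (which the paper's base case $\kappa_2(X,Y)=t_1(X,Y)$ silently suppresses, harmlessly since the entries lie in $A^\circ$) are minor refinements of the same argument.
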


\begin{proof}
We will show by induction on $ n $ the equivalence between vanishing of mixed free and c-free cumulants of order $ n $  in $ X $ and $ Y $ and vanishing of mixed $ t $- and $ \ct$-coefficients of order up to $ n -1 $.
For $ n = 2 $ the result is trivial, since $ k_2 ( X, Y ) = t_1 ( X, Y ) $ and $ \ck_2 ( X,  Y ) = \ct_1 ( X, Y ) $. 

For the inductive step suppose that $ a_1, \dots, a_n $ are not all $ X $ nor all $ Y $. Proposition \ref{cum-tcoeff}
gives
\begin{align*}
\kappa_n ( a_1, \dots, a_n )
 =&
t_{n-1}(a_1,\dots, a_n)+
\sum_{\substack{\pi\in[\mathbbm{1}_n]\\ \pi\neq \mathbbm{1}_n}}
t_\pi [a_1,\dots, a_n]
\\
\ck_n ( a_1, \dots, a_n ) =&
\ct_{n-1}(a_1,\dots, a_n)+
\sum_{\substack{\pi\in[\mathbbm{1}_n]\\ \pi\neq \mathbbm{1}_n}}
\ct_{ \pi } [ a_1, \dots, a_n ].
\end{align*}
Fix $ \pi \in [\mathbbm{1}_n] $, $ \pi \neq \mathbbm{ 1 }_n $. Since $ \pi $ is connected, there is 
$ ( i_1, \dots, i_s ) $, a block of $ \pi $ with $ s < n $ 
such that  $ a_{ i_1 }, \dots, a_{ i_2 } $ are not all $ X $ not all $ Y $, 
so equations (\ref{eq:24}) and (\ref{eq:25}) and the induction hypothesis  imply that $ t_\pi (a_1, \dots, a_n ) =\ct_{\pi }( a_1, \dots, a_n ) = 0 $, 
hence we have that 
$ \kappa_n ( a_1, \dots, a_n )
 =
t_{n-1}(a_1,\dots, a_n) $ 
and
$
\ck_n ( a_1, \dots, a_n ) =
\ct_{n-1}(a_1,\dots, a_n) $ so q.e.d..
\end{proof}

\section{planar trees and the multiplicative property of the $T$-transform}

In this section we will use the combinatorial arguments from \cite{mp} to show that whenever $ X  $ and $ Y $ are two c-free elements from $ A^\circ $, we have that 
 $ T_{ XY} (z ) = T_{ X } ( z ) \cdot T_{ Y } ( z ) $ and $ \T_{ XY} (z ) = \T_{ X} (z) \cdot \T_{ Y } (z ) $ 
 where, for $ Z \in A^\circ $, we define
$  T_Z ( z ) = \sum_{ n = 0 }^\infty t_n ( Z ) z^n $, respectively $  \T_Z ( z ) = \sum_{ n = 0 }^\infty {}^ct_n ( Z ) z^n $

  \subsection{Planar trees} ${}$
  
We will start with a review of the notations and results from \cite{mp}.

  By an \emph{elementary planar tree} we will denote a graph with
   $m\geq 1$ vertices,  $v_1,v_2,\dots, v_m$, and $m-1$ (possibly 0) edges, or branches,
    connecting the vertex $v_1$ (that we will call \emph{root}) to the  vertices $v_2, \dots, v_m$ (that we will call \emph{offsprings}).

  By a \emph{planar tree} we  will understand a graph consisting in a finite number
  of \emph{levels}, such that:
  \begin{enumerate}
  \item[-]first level consists in a single elementary planar tree,
  whose root will be considered the root of the planar tree;
   \item[-]the $k$-th level will consist in a set of elementary
    planar trees such that their roots are  offsprings from the $(k-1)$-th level.
      \end{enumerate}

 Below are represented graphically the elementary planar tree  $ C_1 $ and the 2-level planar tree $ C_2 $:

 \setlength{\unitlength}{.12cm}
 \begin{equation*}
 \begin{picture}(22,18)

\put(-22,8){\circle*{1}} \put(-22,8){\line(2,3){4}}

\put(-18,14){\circle*{1}}

\put(-18,8){\circle*{1}}\put(-18,8){\line(0,1){6}}

\put(-14,8){\circle*{1}} \put(-14,8){\line(-2,3){4}}

\put(-19,4){$ C_1$}

\put(8,16){\circle*{1}}

\put(5,11){\circle*{1}} \put(5,11){\line(3,5){3}}


\put(11,11){\circle*{1}} \put(11,11){\line(-3,5){3}}

\put(8,6){\circle*{1}} \put(8,6){\line(3,5){3}}

\put(14,6){\circle*{1}} \put(14,6){\line(-3,5){3}}

\put(5,2){$ C_2$}

\multiput(17,6)(4,0){3}{\line(1,0){2}}
\multiput(16,11)(4,0){3}{\line(1,0){2}}
\multiput(15,16)(4,0){3}{\line(1,0){2}}
\multiput(17,2)(4,0){3}{\line(1,0){2}}

\put(26,3){level 3}

\put(26,8){level 2}

\put(26,13){level 1}

 \end{picture}
 \end{equation*}
 
   The set of all planar trees with $ n $ vertices will be denoted by $  \mathfrak{T}(n) $.
  If $ C $ is a planar tree, the set of elementary trees composing it will be denoted by  $ E( C ) $, the elementary tree containing the root of $  C $
  will be denoted 
  by $ \mathfrak{r}(C) $, and we define 
 $ \mathfrak{b}(C) = E( C) \setminus \{ \mathfrak{r}(C) \} $.

  On the set of vertices of a planar tree we consider the ``\emph{left depth first}'' order from \cite{aep}, given by:
   \begin{enumerate}
   \item[(i)]roots are less than their offsprings;
   \item[(ii)] offsprings of the same root are ordered from left to right;
    \item[(iii)]if $v$ is less that $w$, then all the offsprings of $v$ are smaller than any offspring of $w$.
    \end{enumerate}
       ( I.e. the order in which the vertices are passed by walking along the branches from the root to the right-most vertex, not counting vertices passed more than one time, see the example below).

  \textbf{Example 4}:

   \setlength{\unitlength}{.12cm}
 \begin{equation*}
 \begin{picture}(18,18)

\put(8,18){1}

\put(8,16){\circle*{1.5}}

\put(3,11){\circle*{1.5}} \put(3,11){\line(1,1){5}}


\put(8,16){\line(2,-1){10}}

\put(18,11){\circle*{1.5}}

\put(18,11){\line(-3,-5){3}}

\put(18,11){\line(3,-5){3}}

\put(21,6){\circle*{1.5}}

\put(15,6){\circle*{1.5}}

\put(8,16){\line(0,-1){5}}

\put(8,11){\circle*{1.5}}

\put(7.5,8){4}

\put(3,6){\circle*{1.5}}\put(3,6){\line(0,1){5}}



\put(0,11){2}

\put(1.5,3 ){3}

\put(19.1,11.3){5}


\put(14,3.2){6}

\put(22,3.2){7}

 \end{picture}
 \end{equation*}

Next, consider, as in \cite{mp}, the map
 $\Theta: [\mathbbm{1}_n]\lra\mathfrak{T}(n)$
  such that $\Theta(\pi)$ is the unique planar tree with the property that if  $(i_1, \dots, i_s)$ is a block of $\pi$, then the vertices of $\Theta(\pi) $ form an elementary tree from $ E (\Theta(\pi))$.
   More precisely, if $(1,2,i_1,\dots, i_s)$ is the block of $\pi$ containing 1,
   then the first level of $\Theta(\pi)$ is the elementary planar tree of root
    numbered 1 and $ s + 1 $ offsprings numbered $(2,i_1,\dots, i_s)$.
     The second level of $\Theta(\pi)$ consists on the elementary trees representing the blocks
     (if any) having $2,i_1,\dots, i_s$ as first elements etc (see Example 5  below). 
As shown  in \cite{mp}, the map $ \Theta $ is well-defined and bijective.
     \medskip
     
     \noindent \textbf{Example 5}:
 \begin{center}
     \includegraphics[width=3in,height=1.1cm]{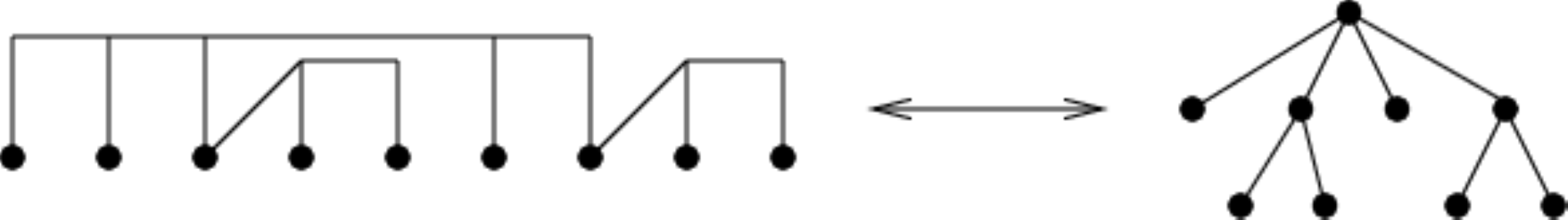}
\end{center}
    \medskip

For $ X \in A^\circ $, define the maps $ \cE_{ X} $, respectively $ \CE_{ X } $ from
 $  \bigcup_{ n \in \mathbb{N } } \mathfrak{T} ( n ) $ 
to 
$ \mathbb{C} $, respectively to $ B $, as follows. If $ C $ is an elementary planar tree with $ n$ vertices, then  
let $ \cE_X ( C ) = t_{ n-1} ( X ) $
and
$ \CE_X (C) = \ct_{ n - 1 } ( X ) $; for $ W  \in \mathfrak{T}(n ) $,  let
\begin{align*}
\cE_X( W ) 
& =
\prod_{ C \in E( W ) } \cE_X ( C ) \\
\CE_X ( W )
& = 
\CE_X ( \mathfrak{r} ( W ) ) \cdot \prod_{ C \in \mathfrak{b}(W) } \cE_X ( C ).
\end{align*} 

As in \cite{mp}, Proposition \ref{cum-tcoeff} and the bijectivity $\Theta $ give that
\begin{equation}\label{eq:10}
\kappa_n ( X ) = \sum_{ C \in \mathfrak{T}(n) } \cE_X ( C )   \ \  \text{and}  \   \
\ck_n ( X ) = \sum_{ C \in \mathfrak{T}(n) } \CE_X ( C ). 
\end{equation}

\subsection{Bicolor planar trees and the Kreweras complement}

  By a  \emph{bicolor elementary  planar tree} we will understand an elementary tree together with a mapping from its offsprings to $\{0, 1\}$ such that the offsprings whose image is 1 are smaller than (with respect to the order relation considered above, i.e. at right of)  the offsprings with image 0. Branches toward offsprings of color 0, respectively 1, will be also said to be of color 0, respectively 1. The set of all bicolor planar trees with $n$ vertices will be denoted by $\mathfrak{EB}(n)$.  Following \cite{mp},  branches of color 1 will be represented by solid lines and  branches of color 0 by dashed lines.  
  
  
   \noindent \textbf{Example 6}:
  The graphical representation of $\mathfrak{EB}(4)$:
  \smallskip
 \begin{center}
\includegraphics[width=3in,height=.8cm]{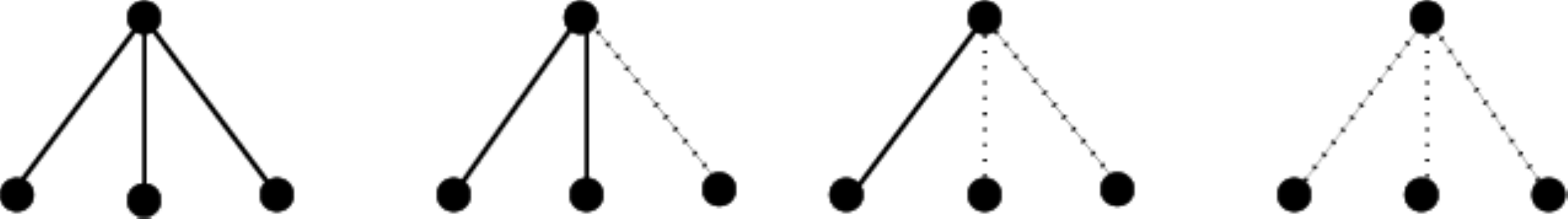}
\end{center}
\smallskip

  A   planar tree whose constituent elementary trees are all bicolor will be called \emph{bicolor planar tree} ; the set of all bicolor planar trees will be denoted by $\mathfrak{B}(n)$.

  Let $ NCL_S (2n) = \{ \pi\in NCL(2n) : $ elements from the same block of $ \pi $ have the same parity $\}$. We will say that the blocks of a partition from $ NCL_S(2n) $ with odd elements are of color 1 and the ones with even elements are of color 0;  blocks of color 1 will be graphically represented by solid lines and blocks of color 0 by dashed lines.

  \noindent \textbf{Example 7}: Representation of $ (1, 7), (2, 6), (3, 5), (4), (8, 12), (9, 11), (10) $:
  \medskip
\begin{center}
\includegraphics[width=3in,height=.9cm]{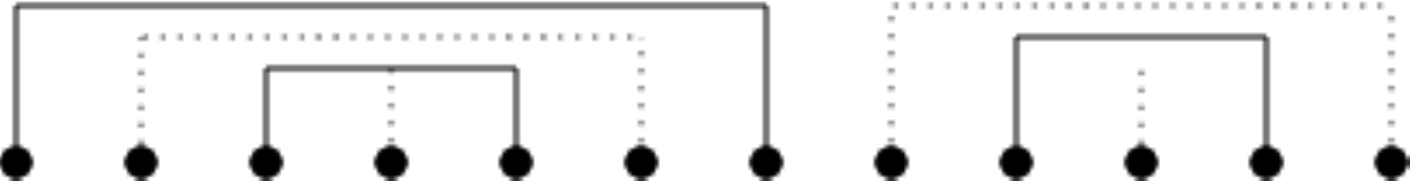}
\end{center}

\smallskip

As shown in \cite{mp}, there exist a bijection $ \Lambda : NCL_S ( 2n ) \lra \mathfrak{B}(n)  $, constructed as follows (see also Example 8 below):

   \begin{enumerate}
   \item[-] If $(i_1,\dots, i_s)$ and $(j_1,\dots, j_p )$ are the two exterior blocks of $\pi$, where $ j_1 =i_s + 1 $, then the first level of $\Lambda(\pi)$ is the elementary tree with $s-1+p-1$ offsprings,  the first $s-1$ of color 1, representing $(i_2,\dots, i_s)$, in this order, and the last $p-1$ of color 0, representing $(j_2,\dots, i_p)$, in this order.

   \item[-] Suppose that $i_1$ and $i_2$ are consecutive elements in a block $B$ of $\pi$ already represented in some tree of $\Lambda(\pi)$. Let $ t_2 = i_2 -1 $ and $ t_1 = j_s + 1 $ if $ i_1 $ is the minimal element of some block $ (j_1, j_2, \dots, j_s )$ of $ \pi $ other than $ B $ and $ t_1 = i_1 + 1 $ otherwise.
    Let $ B_1, \dots , B_s $ be, in this order, the exterior blocks with more than one element of $ \pi_{| t_1, t_1 + 1 \dots, t_2 } $ such that each $ B _k $ has $p(k) $ elements. 
    Finally, if $i_2 $ is the minimal elements of some block $ ( i_2, l_1, \dots, l_{p(s+1)} )  $ of $ \pi $, then let 
    $ D  =( i_2, l_1, \dots, l_{p(s+1)} ) $ (otherwise let $ p(s+1) = 0 $).
    Then we represent $ B_1, \dots, B_s, D $ by an elementary tree with root the vertex representing $ i_2 $ and of $ p(1) + \dots p(s+ 1) - s $ offsprings, where the first $ p(1) - 1 $ are representing, and keeping the color of, the vertices of $ B_1 $ except for the minimal one, the next $p(2) - 1 $ representing elements of $ B_2 $ except for the minimal one etc.
    
   \end{enumerate}

\newpage

\noindent \textbf{Example 8}:
  
\begin{center}

\includegraphics[width=8cm,height=2cm]{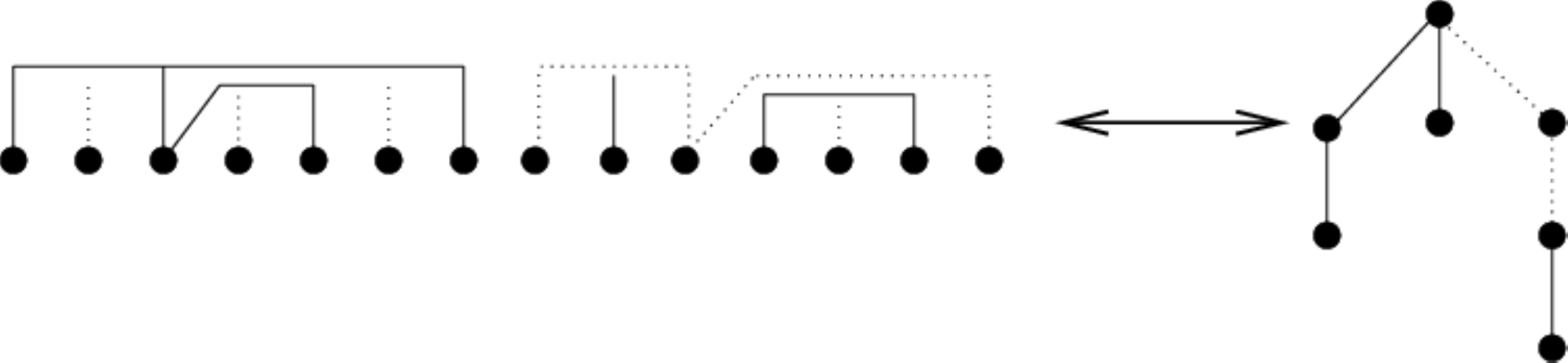}

\end{center}

\medskip

For $ X , Y \in A^\circ $, we define the maps 
$\displaystyle \omega_{XY}$ and $ \com_{XY} $ from $\cup_{ n\in \mathbb{N} } \mathfrak{B}( n ) $ to $\mathbb{C} $,
 respectively
 $  B$
 as follows.
 If
  $ C_0\in\mathfrak{EB}( n )$
   has $ k $ offsprings of color 1 and $ n - ( k +1 )  $ offsprings of color 0, then we define
\begin{align*}
\omega_{X,Y}(C_0)
 & =t_k(X)t_{n-k-1}(Y) 
\\
\com_{X,Y}(C_0)
 & =\ct_k(X) \ct_{ n - k -1 } ( Y ). 
\end{align*}
For  $  W \in\mathfrak{B} ( n ) $, define
\begin{align*}
\omega_{ X , Y }( W )
&
= \prod_{  D \in E ( W )  } \omega_{X,Y}( D )\\
\com_{ X,  Y } ( W )
&
=
\com_{ X, Y } ( \mathfrak{r} ( W ) ) 
\cdot \prod_{  D \in \mathfrak{b} ( W )  } \omega_{X,Y}( D ).
\end{align*} 
\noindent Remark that, for $ \pi \in NC_S ( 2 n ) $, the definitions of $ \Lambda $ and 
$ \omega_{ X, Y } $, $ \com_{ X ,  Y } $ give 
\begin{equation}
\label{eq:11}
\omega_{X,Y}(\Lambda(\pi))
=
\kappa_{\pi_-}[X]\kappa_{\pi_+}[Y],
\end{equation}
respectively 
\begin{equation}\label{eq:12}
\com_{X,Y}(\Lambda(\pi))
= 
\ck_{ | \pi_{-} [ 1 ] | } ( X ) 
\cdot
 \ck_{ | \pi_+ [ 2n ] | } ( Y )\cdot
   \prod_{ \substack{ B \in \pi_{ - }\\ B \neq \pi_{ - } [ 1 ] } } 
   \kappa_{ | B | } (X) 
  \cdot \prod_{\substack{ D \in \pi_{ + } \\ D \neq \pi_{ + } [ 2n] } }
 \kappa_{ | D | } ( Y )
\end{equation}

\subsection{The multiplicative property of the $ \cT$-transform}

\begin{thm} \label{mainthm}
If $ X,Y$ are c-free elements from $A^\circ$, then $T_{XY}(z)=T_X(z)T_Y(z)$
and $ \cT_{ XY}(z) =\cT_X (z)\cdot \cT_Y(z) $.
\end{thm}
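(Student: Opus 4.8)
The plan is to reduce the two power-series identities to the coefficientwise statements
\[
t_n(XY)=\sum_{k=0}^n t_k(X)\,t_{n-k}(Y)
\quad\text{and}\quad
\ct_n(XY)=\sum_{k=0}^n \ct_k(X)\,\ct_{n-k}(Y),
\]
and to prove both families by induction on $n$, the identities of order $n$ following from those of lower order. Since the single-vertex elementary tree carries the weight $t_{m-1}$, the coefficient $t_n(XY)$ is isolated inside $\kappa_{n+1}(XY)$; consequently the inductive step will compare two expansions of the cumulants $\kappa_{n+1}(XY)$ and $\ck_{n+1}(XY)$.

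The first expansion comes from applying \eqref{eq:10} to $XY\in A^\circ$: $\kappa_{n+1}(XY)=\sum_{W\in\mathfrak{T}(n+1)}\cE_{XY}(W)$ and $\ck_{n+1}(XY)=\sum_{W\in\mathfrak{T}(n+1)}\CE_{XY}(W)$, in which the root elementary tree of $W$ on $m$ vertices contributes $t_{m-1}(XY)$, respectively $\ct_{m-1}(XY)$. The second expansion comes from Lemma \ref{lemma:1}: rewriting $\kappa_{n+1}(XY)$ and $\ck_{n+1}(XY)$ as sums of $X$- and $Y$-cumulants linked by the Kreweras complement, and invoking the bijection $\Lambda$ together with \eqref{eq:11}--\eqref{eq:12}, one obtains $\kappa_{n+1}(XY)=\sum_{\widetilde W\in\mathfrak{B}(n+1)}\omega_{X,Y}(\widetilde W)$ and $\ck_{n+1}(XY)=\sum_{\widetilde W\in\mathfrak{B}(n+1)}\com_{X,Y}(\widetilde W)$. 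I would then introduce the forgetful map $\mathfrak{B}(n+1)\lra\mathfrak{T}(n+1)$ erasing the colors: its fiber over a fixed $W$ is the set of all colorings, and on an elementary component with $m$ vertices a coloring is determined by the number $k\in\{0,\dots,m-1\}$ of color-$1$ offsprings, which by definition sit to the right of the color-$0$ ones. Summing $\omega_{X,Y}$, resp.\ $\com_{X,Y}$, over such a fiber therefore factors over the elementary components of $W$, each non-root component producing $\sum_{k=0}^{m-1} t_k(X)\,t_{m-1-k}(Y)$ and, in the $\com$-expansion, the root producing $\sum_{k=0}^{m-1}\ct_k(X)\,\ct_{m-1-k}(Y)$.

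The induction then closes by cancellation. When $W$ is the single elementary tree on $n+1$ vertices, its fiber sum equals $\sum_{k=0}^n t_k(X)t_{n-k}(Y)$, resp.\ $\sum_{k=0}^n\ct_k(X)\ct_{n-k}(Y)$, while $\cE_{XY}(W)=t_n(XY)$, resp.\ $\CE_{XY}(W)=\ct_n(XY)$. For every other $W$ each elementary component has at most $n$ vertices, so the induction hypothesis converts each component's fiber sum back into $t_{m-1}(XY)$, resp.\ $\ct_{m-1}(XY)$; hence these $W$ contribute identically to the two expansions and cancel. What remains is exactly the order-$n$ coefficient identity. The base case $n=0$ is the pair of equalities $\varphi(XY)=\varphi(X)\varphi(Y)$ and $\Phi(XY)=\Phi(X)\Phi(Y)$, the $m=1$ instances of Lemma \ref{lemma:1}, reflecting the vanishing of $\kappa_2(X,Y)$ and $\ck_2(X,Y)$.

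The delicate point, and the only genuinely new feature beyond the scalar argument of \cite{mp}, is the order of the $B$-valued factors. In the $\com$-expansion only the root elementary bicolor tree carries operator weights; by \eqref{eq:12} and the definition of $\com_{X,Y}$ it contributes $\ct_k(X)\cdot\ct_{n-k}(Y)$ with the $X$-factor to the \emph{left} of the $Y$-factor, all remaining weights being scalar and hence central. I expect the main obstacle to be verifying that the bijection $\Lambda$ and the root-first conventions built into $\CE_{XY}$ and $\com_{X,Y}$ keep this single operator factor leftmost and preserve the left-to-right order ``$X$ before $Y$'' throughout the identification of fibers. This is precisely what yields the \emph{untwisted} product $\cT_X(z)\cdot\cT_Y(z)$, in contrast to the twisted relation arising for operator-valued free independence in \cite{dykema}.
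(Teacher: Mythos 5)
Your proposal is correct and follows essentially the same route as the paper: both expand the cumulants of $XY$ once over planar trees via \eqref{eq:10} and once over bicolor planar trees via Lemma \ref{lemma:1}, the bijection $\Lambda$ and \eqref{eq:11}--\eqref{eq:12}, then cancel all non-elementary trees using the induction hypothesis applied componentwise to the color-forgetting fibration. Your explicit attention to the placement of the single $B$-valued factor at the root is exactly the point that makes the product untwisted, and is consistent with the paper's definitions of $\CE_{XY}$ and $\com_{X,Y}$.
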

\begin{proof}
 We need to show that, for all $m\geq 0$
 \begin{equation}\label{final}
 t_m(XY)=\sum_{k=0}^mt_k(X)t_{m-k}(Y))\  \ \text{and} \  
 \ct_m(XY)=\sum_{k=0}^m \ct_k(X) \ct_{m-k}(Y))
 \end{equation}

If $ C_n $ denotes the elementary planar tree with $ n $ vertices, with the notations from the previous two sections,
the equations (\ref{final}) are equivalent to
\begin{equation}\label{finaltree}
 \cE_{XY}(A_n)=\sum_{B\in\mathfrak{EB}(n)}\omega_{X,Y}(B) \  \ \text{and} \  
 \CE_{XY}(A_n)=\sum_{B\in\mathfrak{EB}(n)}\com_{X,Y}(B)
 \end{equation}

\noindent i.e. for example,
\setlength{\unitlength}{.15cm}
 \begin{equation*}
 \begin{picture}(28,7)

 \put(-21,4){$\cE_{XY}($}

\put(-14,3){\circle*{1}}

\put(-12,3){\circle*{1}}

\put(-10,3){\circle*{1}}

\put(-12,7){\circle*{1}}

\put(-14,3){\line(1,2){2}}

\put(-12,3){\line(0,1){4}}

\put(-10,3){\line(-1,2){2}}

\put(-9,4){$)$}

\put(-7,4){$=$}

\put(-5,4){$\omega_{XY}($}

\put(2,3){\circle*{1}}

\put(4,3){\circle*{1}}

\put(6,3){\circle*{1}}

\put(4,7){\circle*{1}}

\put(2,3){\line(1,2){2}}

\put(4,3){\line(0,1){4}}

\put(6,3){\line(-1,2){2}}

\put(7,4){$)$}

\put(9,4){$+$}

\put(11,4){$\omega_{XY}($}

\put(18,3){\circle*{1}}

\put(20,3){\circle*{1}}

\put(22,3){\circle*{1}}

\put(20,7){\circle*{1}}

\put(18,3){\line(1,2){2}}

\put(20,3){\line(0,1){4}}

\multiput(20,7)(.25,-.5){8}{\line(0,1){.2}}

\put(23,4){$)$}

\put(25,4){$+$}

\put(27,4){$\omega_{XY}($}

\put(34,3){\circle*{1}}

\put(36,3){\circle*{1}}

\put(38,3){\circle*{1}}

\put(36,7){\circle*{1}}

\put(34,3){\line(1,2){2}}

\multiput(36,3)(0,.5){8}{\line(0,1){.2}}

\multiput(36,7)(.25,-.5){8}{\line(0,1){.2}}

\put(39,4){$)$}

\put(41,4){$+$}

\put(43,4){$\omega_{XY}($}

\put(50,3){\circle*{1}}

\put(52,3){\circle*{1}}

\put(54,3){\circle*{1}}

\put(52,7){\circle*{1}}

\multiput(52,7)(-.25,-.5){8}{\line(0,1){.2}}

\multiput(52,3)(0,.5){8}{\line(0,1){.2}}

\multiput(52,7)(.25,-.5){8}{\line(0,1){.2}}

\put(55,4){$)$}

 \end{picture}
 \end{equation*}

We will prove (\ref{finaltree}) by induction on $ n $. For $  n =0$, the result is trivial. Suppose (\ref{finaltree}) true for $m\leq n-1$.

Relation (\ref{eq:10}) and Lemma \ref{lemma:1} give
\begin{align*}
\sum_{C \in\mathfrak{T}(n)}\CE_{XY}( C )
&=
\ck_n( X Y )\\
=&
\sum_{ \pi \in NC_S( 2n ) }
\ck_{ | \pi_{-} [ 1 ] | } ( X ) 
\cdot
 \ck_{ | \pi_+ [ 2n ] | } ( Y ) 
  \cdot
  \prod_{ \substack{ B \in \pi_{ - }\\ B \neq \pi_{ - } [ 1 ] } } 
  \kappa_{ | B | } (X) 
\cdot \prod_{\substack{ D \in \pi_{ + } \\ D \neq \pi_{ + } [ 2n] } }
 \kappa_{ | D | } ( Y )
\end{align*}
 \noindent and equation (\ref{eq:12}) and the bijectivity of $ \Lambda $ give:
 \begin{equation}\label{eq1:7}
 \sum_{C \in\mathfrak{T}(n)}\CE_{XY}( C )
  =\sum_{B\in\mathfrak{B}(n)}\com_{X,Y}(B).
  \end{equation}
  
  Similarly, we have that
  \begin{equation}\label{eq1:8}
  \sum_{C \in\mathfrak{T}(n)}\cE_{XY}( C )
    =\sum_{B\in\mathfrak{B}(n)}\omega_{X,Y}(B).
  \end{equation}

 All non-elementary trees from $\mathfrak{T}(n)$ are composed of elementary trees with less than $ n $ vertices. The relations (\ref{eq1:7})
 and (\ref{eq1:8}) imply that the image under $\CE_{XY}$, respectively
 $ \cE_{ XY } $ of any such tree is the sum of the images under
 $ \com_{ XY } $, respectively under $\omega_{XY}$ of its colored versions. Hence
 \begin{equation*}
 \sum_{\substack{C\in\mathfrak{T}(n)\\C\neq C_n}}\cE_{XY}(C)
  =
   \sum_{\substack{B\in\mathfrak{B}(n)\\B\notin\mathfrak{EB}(n)}}
   \omega_{X,Y}(B)
   \  \  \
   \text{and} \
   \sum_{\substack{C\in\mathfrak{T}(n)\\C\neq C_n}}\CE_{XY}(C)
     =
      \sum_{\substack{B\in\mathfrak{B}(n)\\B\notin\mathfrak{EB}(n)}}
      \com_{X,Y}(B).
 \end{equation*}

  \noindent Finally,  (\ref{eq1:7}), (\ref{eq1:8}) and the  two equations above give that 
  \begin{equation*}
  \cE_{XY}(A_n)=\sum_{B\in\mathfrak{B}(n)}\omega_{X,Y}(B) \ \ \ \text{and} \  \ 
  \CE_{XY}(A_n)=\sum_{B\in\mathfrak{B}(n)}\com_{X,Y}(B)
  \end{equation*}
which imply (\ref{finaltree}).
\end{proof}

\section{Infinite Divisibility} Fix a unital $C^{*}$-subalgebra $B$ of $L(H)$, the $C^{*}$-algebra of bounded linear operators on a Hilbert space $H$. In this section, we study the infinite divisibility relative to the c-freeness. A natural framework for such a discussion is in a \emph{c-free probability space} $(A,\Phi,\varphi)$, that is, the algebras $A$ is also a concrete $C^{*}$-algebra acting on some Hilbert space $K$, the linear map $\Phi:A\rightarrow B$ is a unital completely positive map, and the expectation functional $\varphi$ is a state on $L(K)$. Note that we have the norm $||\Phi||=||\Phi(1)||=1$. 

The \emph{distribution} of a unitary $u \in (A,\Phi,\varphi)$, written as the spectral integral \[u=\int_{\mathbb{T}}\xi\,dE_u (\xi),\] is the pair $(\mu, \nu)$, where $\nu=\varphi \circ E_{u}$ is a positive Borel probability measure on the circle $\mathbb{T}=\{|\xi|=1\}$, and $\mu$ is a linear map from $\mathbb{C}[\xi, 1/\xi]$, the ring of Laurent polynomials, into the $C^{*}$-algebra $B$ such that 
\[\mu(f)=\Phi(f(u, u^{*})), \quad f\in \mathbb{C}[\xi,1/\xi].\]
 Of course, the positivity of $\Phi$ and the Stinespring theorem (see \cite{paulsen}, Theorem 3.11) imply that the map $\mu$ extends to a completely positive map on $C(\mathbb{T})$, the $C^{*}$-algebra of continuous functions on $\mathbb{T}$. More generally, given any sequence $\{A_n\}_{ n\in\mathbb{Z}} \subset L(H)$, it is known (see \cite{paulsen}) that the \emph{operator-valued trigonometric moment sequence}
 \[A_n=\mu(\xi^n),\quad n\in \mathbb{Z},\]
  extends linearly to a completely positive map $\mu:C(\mathbb{T})\rightarrow L(H)$ if and only if the operator-valued power series  $F(z)=A_0/2+\sum_{k=1}^{\infty}z^kA_k$ converges on the open unit disk $\mathbb{D}$ and satisfies $F(z)+F(z)^{*}\geq 0$ for $z\in\mathbb{D}$. In particular, for the unitary $u$ this implies that its moment generating series \[M_{u}(z)=\Phi(zu(1-zu)^{-1})=\sum_{k=1}^{\infty}z^k\mu(\xi^k)\] and \begin{equation}\label{eq:18} m_{u}(z)=\varphi(zu(1-zu)^{-1})=\int_{\mathbb{T}}\frac{z\xi}{1-z\xi}\,d\nu(\xi)
  \end{equation} 
  satisfy the properties:
   \[I+M_{u}(z)+M_{u}(z)^{*} \geq 0\quad \text{and}\quad 1+m_{u}(z)+\overline{m_{u}(z)} \geq 0\] 
   for $|z|<1$. Thus, the formula
   \begin{equation}\label{B-trans}
   B_{u}(z)=\frac{1}{z}M_{u}(z)(I+M_{u}(z))^{-1},\quad z\in \mathbb{D},
   \end{equation}
   defines an analytic function from the disk $\mathbb{D}$ to the algebra $B$, with the norm $||B_{u}(z)||<1$ for $z\in\mathbb{D}$. Analogously, the function \[b_{u}(z)=\frac{m_{u}(z)}{z+zm_{u}(z)},\quad z\in\mathbb{D},\] will be an analytic self-map of the disk $\mathbb{D}$. Notice that we have $B_{u}(0)=\Phi(u)$ and $b_{u}(0)=\varphi(u)$.

Conversely, suppose we are given two analytic maps $B:\mathbb{D}\rightarrow B$ and $b:\mathbb{D}\rightarrow \mathbb{D}$ satisfying $||B(z)||<1$ for $|z|<1$. Then the maps $M(z)=zB(z)(I-zB(z))^{-1}$ and $m(z)=zb(z)/(1-zb(z))$ are well-defined in $\mathbb{D}$, and they can be written as the convergent power series: \[M(z)=\sum_{k=1}^{\infty}z^kA_k\quad \text{and} \quad m(z)=\sum_{k=1}^{\infty}a_kz^k,\quad z\in \mathbb{D},\] where the operators $A_k\in B$ and the coefficients $a_k \in \mathbb{D}$. Since $I-|z|^2B(z)B(z)^{*} \geq 0$, we have \[I+M_{u}(z)+M_{u}(z)^{*}=(I-zB(z))^{-1}(I-|z|^2B(z)B(z)^{*})[(I-zB(z))^{-1}]^{*} \geq 0\] for every $z \in \mathbb{D}$. Therefore, the solution of the operator-valued trigonometric moment sequence problem implies that the map \[
\mu(\xi^{n})=\begin{cases}
A_{n}, & n>0;\\
I, & n=0;\\
A_{n}^{*}, & n<0.
\end{cases}
\] extends linearly to a completely positive map from $C(\mathbb{T})$ into $B$. In the case of $m(z)$, we obtain a Borel probability measure $\nu$ on $\mathbb{T}$ satisfying \eqref{eq:18}. The pair $(\mu,\nu)$ is uniquely determined by the analytic maps $B$ and $b$. It is now easy to construct a c-free probability space $(A,\Phi,\varphi)$ and a unitary random variable $u\in A$ so that the distribution of $u$ is precisely the pair $(\mu,\nu)$. Indeed, we simply let $A=C(\mathbb{T})$, whose members are viewed as the multiplication operators acting on the Hilbert space $L^{2}\left(\mathbb{T};\nu\right)$, $\Phi=\mu$, and the variable $u$ can be defined as \[(uf)(\xi)=\xi f(\xi),\quad \xi \in \mathbb{T},\quad f\in L^{2}\left(\mathbb{T};\nu\right).\] In summary, we have identified the distribution of $u$ with the pair $(B_u,b_u)$ of contractive analytic functions. 

A unitary $u\in A$ is said to be \emph{c-free infinitely divisible} if for every positive integer $n$, there exists identically distributed c-free unitaries
$u_1,u_2,\cdots,u_n$ in $A$ such that $u$ and the product $u_1u_2\cdots u_n$ have the same distribution.

It follows from the definition of the c-freeness that if a unitary $u \in A$, with the distribution $(\mu,\nu)$, is c-free infinitely divisible, then the law $\nu$ must be infinitely divisible with respective to the \emph{free multiplicative convolution} $\boxtimes$, that is, to each $n \geq 1$ there exists a probability measure $\nu_n$ on $\mathbb{T}$ such that \[\nu=\nu_n \boxtimes \nu_n \boxtimes \cdots \boxtimes \nu_n \quad (n\,\,\text{times}).\] 

The theory of $\boxtimes$-infinite divisibility is well-understood, see [5], and we shall focus on the c-free infinitely divisible distribution $\mu$, or equivalently, on the function $B_u$.
 From Equation 
 (\ref{B-trans}) and Proposition \ref{analytic-ct}, we have that the $\cT$-transform of $u$ satisfies \[
 \cT_{u}\left(m_{u}(z)\right)=B_{u}(z).\quad 
 \] 
 Therefore, Theorem 2.1 yields immediately the following characterization of c-freely infinite divisibility.

\begin{prop} A unitary $u \in (A,\Phi,\varphi)$ with distribution $(\mu,\nu)$ is c-free infinitely divisible if and only if $\nu$ is $\boxtimes$-infinitely divisible and the function $B_u$ is infinitely divisible in the sense that to each $n \geq 1$, there exists an analytic map $B_n:\mathbb{D}\rightarrow B$ such that $||B_n(z)||<1\quad \text{and} \quad B_u(z)=\left[B_n(z)\right]^n,\quad z\in \mathbb{D}.$ 
\end{prop}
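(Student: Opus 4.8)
The plan is to prove the equivalence by translating the distributional condition ``$u$ and $u_1 u_2 \cdots u_n$ have the same distribution'' into an identity for the $\cT$-transform, using the already-established multiplicativity from Theorem \ref{mainthm}. First I would observe that the distribution of $u$ is, by the identification established in the preceding discussion, equivalent to the pair $(B_u, b_u)$ of contractive analytic functions, and that the scalar part $b_u$ is determined by $\nu$. So the condition on $u$ splits cleanly into a scalar part (concerning $\nu$ and $b_u$) and an operator part (concerning $\mu$ and $B_u$). The scalar part is exactly the classical statement that $\nu$ is $\boxtimes$-infinitely divisible, which is the first half of the claimed characterization and is handled by the known theory cited in the excerpt.

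For the operator part, I would use the key relation $\cT_u(m_u(z)) = B_u(z)$ together with the multiplicativity $\cT_{XY}(z) = \cT_X(z)\cdot\cT_Y(z)$ from Theorem \ref{mainthm}. If $u$ is c-free infinitely divisible, then for each $n$ there exist identically distributed c-free unitaries $u_1, \dots, u_n$ with $u \overset{d}{=} u_1 \cdots u_n$. Since they are identically distributed, each $u_j$ has the same $\cT$-transform, say $\cT_{u_1}$; applying multiplicativity $n-1$ times gives $\cT_{u_1 \cdots u_n}(z) = \left[\cT_{u_1}(z)\right]^n$. Because $u$ and $u_1 \cdots u_n$ share a distribution, they share the same $m$-series and the same $\cT$-transform, so evaluating at $z = m_u(w)$ and invoking $\cT_u(m_u(w)) = B_u(w)$ yields $B_u(w) = \left[\cT_{u_1}(m_u(w))\right]^n = \left[B_{u_1}(w)\right]^n$, which is exactly the desired factorization with $B_n = B_{u_1}$. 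The contractivity $\|B_n(z)\| < 1$ follows because each $u_j$ is itself a unitary in a c-free probability space, so its $B$-function is contractive by the general construction in the excerpt.

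Conversely, if $\nu$ is $\boxtimes$-infinitely divisible and $B_u(z) = \left[B_n(z)\right]^n$ with $B_n$ contractive, I would reconstruct the factors. From the $\boxtimes$-divisibility of $\nu$ obtain $\nu_n$ with $\nu = \nu_n^{\boxtimes n}$, which via the free multiplicative convolution theory supplies a scalar self-map $b_n$ of $\mathbb{D}$ corresponding to $\nu_n$. Pairing $B_n$ with $b_n$ and applying the converse construction (the passage from a pair $(B,b)$ of contractive analytic functions to a c-free probability space and a unitary realizing that distribution, carried out explicitly in the excerpt) produces a unitary $u_n$ with distribution $(\mu_n, \nu_n)$; taking $n$ c-free copies $u_1, \dots, u_n$ of $u_n$ and running the multiplicativity argument in reverse shows that $u_1 \cdots u_n$ has $\cT$-transform $\left[\cT_{u_n}\right]^n$ and scalar data $\nu_n^{\boxtimes n} = \nu$, hence has distribution $(\mu, \nu)$, i.e. the same as $u$.

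The main obstacle I anticipate is ensuring that the two halves---the scalar $b$-data and the operator $B$-data---can be \emph{simultaneously} matched by a single genuine unitary, and that the c-free copies $u_1, \dots, u_n$ can actually be realized as c-free in one common space. The multiplicativity theorem guarantees the $\cT$-transforms multiply, but one must verify that the reconstructed pair $(\mu_n, \nu_n)$ is the distribution of an honest unitary (positivity/complete positivity of $\mu_n$ and the resulting moment problem), and that the free-probabilistic construction of the $u_j$ as c-free with prescribed common distribution is available; the delicate compatibility is that the same index $n$ controls both the $\boxtimes$-root of $\nu$ and the $n$-th root of $B_u$, so the scalar and operator factorizations must be produced with matching arity. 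This is essentially a bookkeeping matter once the converse construction from the excerpt is in hand, but it is where the argument must be stated carefully rather than waved through.
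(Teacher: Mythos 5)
Your overall strategy is the paper's: the proposition is derived there ``immediately'' from the identity $\cT_u(m_u(z))=B_u(z)$ together with the multiplicativity $\cT_{XY}=\cT_X\cdot\cT_Y$ of Theorem \ref{mainthm}, exactly as you propose, and your converse direction relies on the realization of a contractive pair $(B,b)$ as the distribution of a unitary, which is the construction carried out in the paragraphs preceding the statement. So the decomposition into a scalar part (handled by the known $\boxtimes$ theory) and an operator part (handled by the $\cT$-transform) is the intended one.

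There is, however, a concrete error in your forward direction. From $\cT_{u_1\cdots u_n}=[\cT_{u_1}]^n$ and $u\overset{d}{=}u_1\cdots u_n$ you correctly get $B_u(w)=[\cT_{u_1}(m_u(w))]^n$, but the last step $\cT_{u_1}(m_u(w))=B_{u_1}(w)$ is false: the defining relation is $\cT_{u_1}(m_{u_1}(w))=B_{u_1}(w)$, and $m_u\neq m_{u_1}$ since the law of $u$ is $\nu_1^{\boxtimes n}$, not $\nu_1$. The correct $n$-th root is $B_n:=\cT_{u_1}\circ m_u$, and its contractivity does not follow from ``each $u_j$ is a unitary''; one needs the subordination property of $\boxtimes$ on $\mathbb{T}$, which supplies an analytic self-map $\omega$ of $\mathbb{D}$ with $m_u=m_{u_1}\circ\omega$, so that $B_n=B_{u_1}\circ\omega$ is indeed analytic and contractive (and so that evaluating $\cT_{u_1}$ at points of $m_u(\mathbb{D})\subseteq m_{u_1}(\mathbb{D})$ even makes sense). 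The same composition issue reappears, mirrored, in your converse: if you pair the given root $B_n$ directly with the scalar data $b_{\nu_n}$, the resulting product of $n$ c-free copies has operator data $[\,(\cT$-transform of the factor$)\circ m_u\,]^n=[B_n\circ\omega]^n$ rather than $[B_n]^n$; the factor you actually need is the one whose $B$-function is $B_n\circ\omega^{-1}$ on $\omega(\mathbb{D})$, and you must argue that this is (extends to) a contractive analytic map before the moment-problem construction applies. These are exactly the points you flag at the end as ``bookkeeping,'' but as written the identification $B_n=B_{u_1}$ is wrong, and the repair genuinely requires the subordination function rather than only careful bookkeeping.
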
 

It was proved in [5] that a $\boxtimes$-infinitely divisible law $\nu$ is the Haar measure $d\theta/2\pi$ on the circle group $\mathbb{T}=\{\exp(i\theta):\theta\in (-\pi,\pi]\}$ if and only if $\nu$ has zero first moment.  We now show the c-free analogue of this result. 
\begin{prop} Let $u \in (A,\Phi,\varphi)$ be a c-free infinitely divisible unitary with $\Phi(u)=0$. If $\varphi(u)=0$, then one has $\Phi(u^n)=0$ for all integers $n \neq 0$.\end{prop}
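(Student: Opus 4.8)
The plan is to prove that both hypotheses together force the operator-valued moment series $M_u(z)=\sum_{k\ge 1}\Phi(u^k)z^k$ to vanish identically, and then to recover the negative powers by adjoint symmetry. The key initial observation is that the scalar hypothesis $\varphi(u)=0$ is much stronger than it appears. Since $\varphi(u)=\int_{\mathbb T}\xi\,d\nu(\xi)$ is exactly the first moment of $\nu$, and since the c-free infinite divisibility of $u$ forces $\nu$ to be $\boxtimes$-infinitely divisible, the characterization quoted from [5] shows that $\nu$ must be the Haar measure on $\mathbb T$. Every nonzero-index moment of the Haar measure vanishes, so $\varphi(u^k)=\int_{\mathbb T}\xi^k\,d\nu(\xi)=0$ for all $k\ge 1$, and hence the scalar moment series collapses: $m_u(z)\equiv 0$.

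With $m_u\equiv 0$ in hand, I would substitute it into Proposition \ref{analytic-ct}. Read as an identity of formal power series in $z$, equation (\ref{eq:39}) states that $\cT_u(m_u(z))\cdot(1+M_u(z))=\tfrac1z M_u(z)$; because $m_u$ has zero constant term, the composition $\cT_u(m_u(z))$ is a well-defined formal series, and setting $m_u\equiv 0$ collapses it to the single constant term $\cT_u(0)=\ct_0(u)$. The $n=1$ instance of the defining recurrence (\ref{eq:32}) gives $\ct_0(u)=\Phi(u)$, which is zero by hypothesis. The identity therefore reduces to $\tfrac1z M_u(z)=0$, i.e.\ $\Phi(u^k)=0$ for every $k\ge 1$.

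To finish, I would treat the negative exponents by symmetry. As $u$ is unitary, $u^{-k}=(u^k)^*$, and since $\Phi$ is completely positive it is $*$-preserving; hence $\Phi(u^{-k})=\Phi(u^k)^*=0$ for all $k\ge 1$ as well. Together with the positive case this yields $\Phi(u^n)=0$ for every integer $n\ne 0$, which is the assertion.

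The one point that needs care is the application of Proposition \ref{analytic-ct} in the second paragraph: the proposition is stated for $X\in A^\circ$, whereas here $\varphi(u)=0$ puts $u$ outside $A^\circ$. The safe route is to invoke it only in its formal-power-series form, whose underlying coefficient identity is purely combinatorial and valid for every $X\in A$; membership in $A^\circ$ is needed only to invert $m_u$ and recover $\cT_u$ from the moments, a step we never use. Everything else is routine bookkeeping: turning $\varphi(u)=0$ into ``$\nu$ is Haar'', identifying $\ct_0(u)=\Phi(u)$, and invoking $*$-preservation of $\Phi$.
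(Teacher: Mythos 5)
Your opening step (the scalar hypothesis forces $\nu$ to be Haar measure, hence $m_u\equiv 0$) and your closing step ($\Phi(u^{-k})=\Phi(u^k)^{*}$) both match the paper. The gap is in the middle: Proposition \ref{analytic-ct} cannot be applied to $u$, and your attempted repair --- that the coefficient identity behind (\ref{eq:39}) is ``purely combinatorial and valid for every $X\in A$'' --- is false. The coefficients $\ct_n(X)$ are defined by the recurrence (\ref{eq:32}), in which $\ct_{n-1}(X)$ enters multiplied by $\varphi(X)^{n-1}$ (the factor $\prod_{p\in s(\pi)}t_0(X_p)$ for $\pi=\mathbbm{1}_n$); one can solve for $\ct_{n-1}(X)$ only when $\varphi(X)\neq 0$, which is exactly why the definition is restricted to $A^{\circ}$. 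When $\varphi(u^k)=0$ for all $k$, the recurrence does not define the $\ct_n(u)$ at all; it degenerates into the \emph{constraint} $\Phi(u^{n})=\Phi(u)\,\Phi(u^{n-1})$, which is precisely the statement you are trying to prove. That constraint genuinely fails without further input: take $A=C(\mathbb{T})$, $\varphi$ integration against Haar measure, $B=\mathbb{C}$, $\Phi$ integration against $\tfrac12(\delta_{1}+\delta_{-1})$, and $u$ the coordinate function. This $u$ satisfies every hypothesis your second paragraph actually uses ($\varphi(u^k)=0$ for all $k\geq 1$ and $\Phi(u)=0$), yet $\Phi(u^2)=1\neq 0$. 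Your argument never invokes the infinite divisibility of the operator-valued part of the distribution, and that is the indispensable ingredient.

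The paper supplies it as follows: by $2$-divisibility, write $u=u_1u_2$ in distribution with $u_1,u_2$ c-free and identically distributed; multiplicativity of the first moment under the free product gives $\varphi(u_1)^2=\varphi(u)=0$, so the word $(u_1u_2)^n$ is alternating and $\varphi$-centered and condition (ii) of Definition \ref{def:1} applies directly, yielding $\Phi(u^n)=\Phi(u_1)\Phi(u_2)\cdots\Phi(u_1)\Phi(u_2)=\Phi(u_1u_2)^n=\Phi(u)^n=0$. If you want to rescue your $\cT$-transform route, you would first have to establish $\Phi(u^{n+1})=\Phi(u)\Phi(u^n)$ for such $u$ by an independent argument, and the only available source for that identity is this factorization.
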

\begin{proof} Denote by $(\mu,\nu)$ the distribution of $u$. Assume first that $\varphi(u)=0$, hence the law $\nu$  equals $d\theta/2\pi$. The c-free infinitely divisibility of $u$ shows that there exist c-free and identically distributed unitaries $u_1$ and $u_2$ in $A$ such that $\varphi(u_1)=0=\varphi(u_2)$ and $u=u_1u_2$ in distribution. Therefore, for $n>1$, we have \begin{eqnarray*}
\Phi(u^n) 
 & = &
  \Phi(
\underbrace{(u_1u_2)(u_1u_2)\cdots(u_1u_2)}_{n\:\text{times}})\\
 & = &
 \Phi(u_1)\Phi(u_2)\cdots\Phi(u_1)\Phi(u_2)\\&=& \Phi(u_1u_2)\Phi(u_1u_2)\cdots\Phi(u_1u_2)=\Phi(u)^n=0.
 \end{eqnarray*} The case of $n<0$ follows from the identity $\Phi(u^{n})=\Phi(u^{-n})^{*}$.\end{proof}

An interesting case for c-freely infinite divisibility arises from the commutative situation. To illustrate, suppose $B=C(X)$, the algebra of continuous complex-valued functions defined on a Hausdorff compact set $X\subset\mathbb{C}$ equipped with the usual supremum norm. Denote by $\mathcal{M}$ the family of all Borel finite (positive) measures on $\mathbb{T}$, equipped with the weak*-topology from duality with continuous functions on $\mathbb{T}$. Then we shall have the following \begin{prop} Let $\nu$ be a $\boxtimes$-infinitely divisible law on $\mathbb{T}$ and $\nu \neq d\theta/2\pi$. A unitary $u \in (A,\Phi,\varphi)$ is c-free infinitely divisible if and only if its $\cT$-transform admits the following L\'{e}vy-Hin\v{c}in type representation: \[\cT_{u}\left(\frac{z}{1-z}\right)(x)=\gamma_{x} \exp \left(\int_{\xi \in \mathbb{T}}\frac{\xi z +1}{\xi z -1}\,d\sigma_{x}(\xi)\right), \quad x\in X, \quad z \in \mathbb{D},\] where the map $x \mapsto \gamma_{x}$ is a continuous function from $X$ to the circle $\mathbb{T}$ and the map $x \mapsto \sigma_{x}$ is weak*-continuous from $X$ to $\mathcal{M}$. 
\end{prop}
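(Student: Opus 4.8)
The plan is to reduce the statement to the already-established scalar case of \cite{mvp-jcw} by evaluating at the points of $X$, and then to upgrade the resulting pointwise representations to a continuous, respectively weak$^\ast$-continuous, family. The commutativity of $B=C(X)$ is what makes the reduction possible. For each $x\in X$ the evaluation $\mathrm{ev}_x\colon C(X)\to\mathbb{C}$ is a character, so $\varphi_x:=\mathrm{ev}_x\circ\Phi$ is a state and $(\varphi_x,\varphi)$ is a scalar c-free pair. Since $\mathrm{ev}_x$ is multiplicative, condition (ii) of Definition \ref{def:1} survives application of $\mathrm{ev}_x$, so c-freeness with respect to $(\Phi,\varphi)$ implies c-freeness with respect to $(\varphi_x,\varphi)$; applying $\mathrm{ev}_x$ to a factorization $u=_d u_1\cdots u_n$ into $(\Phi,\varphi)$-c-free identically distributed unitaries then shows that $u$ is c-free infinitely divisible with respect to $(\varphi_x,\varphi)$ as well. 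Because $\mathrm{ev}_x$ commutes with the products in the recurrences (\ref{eq:31})--(\ref{eq:32}), an easy induction gives $\mathrm{ev}_x({}^ct_n(u))={}^ct_n^{(\varphi_x,\varphi)}(u)$, hence $\cT_u(\tfrac{z}{1-z})(x)=\cT_u^{(\varphi_x,\varphi)}(\tfrac{z}{1-z})$. As $\nu\neq d\theta/2\pi$ has nonzero first moment and is $\boxtimes$-infinitely divisible, the scalar result of \cite{mvp-jcw} applies for each $x$ and yields a unique $\gamma_x\in\mathbb{T}$ and a unique finite positive measure $\sigma_x$ on $\mathbb{T}$ giving the asserted exponential representation.

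It then remains to prove regularity in $x$. Writing $G(z):=\cT_u(\tfrac{z}{1-z})\in C(X)$, the series $\sum_n {}^ct_n(u)(\tfrac{z}{1-z})^n$ converges uniformly in $C(X)$-norm on each disk $|z|\le r<1$, so $(z,x)\mapsto G(z)(x)$ is jointly continuous and non-vanishing on $\{|z|\le r\}\times X$. At $z=0$ we have $G(0)=\cT_u(0)=\Phi(u)$ and $G(0)(x)=\gamma_x\,e^{-\sigma_x(\mathbb{T})}$, so $\gamma_x=\Phi(u)(x)/|\Phi(u)(x)|$ is continuous (the pointwise representation forces $\Phi(u)(x)\neq0$). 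Compactness of $X$ together with $\inf_x|\Phi(u)(x)|>0$ then yields a uniform bound $\sup_x\sigma_x(\mathbb{T})=\sup_x(-\log|\Phi(u)(x)|)=:R<\infty$.

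For the measures themselves I would use the Herglotz/Poisson mechanism, and this is the heart of the matter. Taking real parts in the representation gives $-\log|G(z)(x)|=\int_{\mathbb{T}}\frac{1-|z|^2}{|1-\xi z|^2}\,d\sigma_x(\xi)$, so $-\log|G(\cdot)(x)|$ is the Poisson integral of $\sigma_x^\flat$, the push-forward of $\sigma_x$ under $\xi\mapsto\bar\xi$. Hence for $h\in C(\mathbb{T})$ one recovers $\int h\,d\sigma_x^\flat=\lim_{r\to1^-}\frac{1}{2\pi}\int_0^{2\pi}h(e^{i\theta})\bigl(-\log|G(re^{i\theta})(x)|\bigr)\,d\theta$. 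For fixed $r<1$ the right-hand side is continuous in $x$ by the joint continuity above, and the uniform mass bound $R$ makes the convergence as $r\to1^-$ uniform in $x$, since the error is controlled by $\|P_r\ast h-h\|_\infty\cdot R$, which depends only on $R$ and the modulus of continuity of $h$. Therefore $x\mapsto\int h\,d\sigma_x^\flat$ is continuous for every $h$, and, the reflection being a homeomorphism of $\mathcal{M}$, the map $x\mapsto\sigma_x$ is weak$^\ast$-continuous. I expect this passage from pointwise boundary recovery to recovery that is uniform in $x$ to be the main obstacle, and it is exactly what forces the extraction of the uniform total-mass bound from compactness of $X$ and the non-vanishing of $\Phi(u)$.

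For the converse I would run the construction backwards. Given continuous $\gamma$ and weak$^\ast$-continuous $\sigma$, take $\nu_n$ to be a $\boxtimes$-$n$-th root of $\nu$ and set $\cT_{u_n}(\tfrac{z}{1-z})(x)=\delta_x\exp\bigl(\tfrac1n\int_{\mathbb{T}}\frac{\xi z+1}{\xi z-1}\,d\sigma_x(\xi)\bigr)$ with $\delta_x^{\,n}=\gamma_x$; the scalar converse of \cite{mvp-jcw} shows pointwise that this is the $\cT$-transform of a genuine c-free distribution, the continuity in $x$ shows that it assembles into a $C(X)$-valued map, and Theorem \ref{mainthm} gives $\cT_{u_1\cdots u_n}=(\cT_{u_n})^n=\cT_u$, whence $u=_d u_1\cdots u_n$. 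The delicate point here is the continuous choice of the $n$-th root $\delta_x$ of $\gamma_x$; I would address this using that $\gamma$ arises from $\Phi(u)=\cT_u(0)$ and, where a topological obstruction to lifting through the covering $\mathbb{T}\to\mathbb{T}$ could occur, reduce to the scalar construction pointwise on the connected structure of $X$.
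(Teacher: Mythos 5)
Your forward direction is essentially the paper's argument. The reduction to the scalar case through the evaluation characters $\mathrm{ev}_x$ (which the paper compresses into ``follows directly from the scalar-valued case''), the identity $\gamma_x e^{-\sigma_x(\mathbb{T})}=\cT_u(0)(x)=\Phi(u)(x)$ giving the continuity of $\gamma_x$ and the mass bound, and the recovery of $\sigma_x$ from the Poisson integral of $-\log\left|\cT_u\left(\tfrac{z}{1-z}\right)(x)\right|$ all match the paper. The one place you differ is in how the boundary recovery is turned into weak$^\ast$-continuity: you make it quantitative, getting convergence of $\int (P_r\ast h)\,d\sigma_x^\flat$ to $\int h\,d\sigma_x^\flat$ uniformly in $x$ from the uniform mass bound $R$, whereas the paper takes a net $x_\alpha\to x$, uses the bound on $\{\sigma_{x_\alpha}(\mathbb{T})\}$ and weak$^\ast$-compactness of bounded sets in $\mathcal{M}$, and concludes from the convergence of the Poisson integrals at every $z\in\mathbb{D}$ that every cluster point of $\{\sigma_{x_\alpha}\}$ must equal $\sigma_x$. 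Both work; yours is marginally more self-contained, the paper's is shorter.

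Where you genuinely diverge is the converse, which the paper's proof does not address at all. You correctly identify the real obstacle there, but your proposed resolution (``reduce to the scalar construction pointwise on the connected structure of $X$'') is not a proof: the obstruction to a continuous $n$-th root of $\gamma$ lives in $\check{H}^1(X;\mathbb{Z})$ and does not disappear by arguing pointwise. Concretely, for $X=\mathbb{T}$ and $\gamma_x=x$ there is no continuous square root, so the construction of $u_1,\dots,u_n$ with $C(X)$-valued $\cT$-transforms cannot proceed as written. Note that your own forward argument shows the class of $\gamma$ is forced to be divisible by every $n$ whenever $u$ is c-free infinitely divisible (since $\gamma=\left(\Phi(u_n)/|\Phi(u_n)|\right)^n$), so this hypothesis is not vacuous; to close the converse one must either add it or observe that it is exactly the missing condition. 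This gap is in the statement's ``if'' direction as much as in your attempt, but your attempt surfaces it without closing it.
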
 
\begin{proof} The integral representation follows directly from the characterization of c-free infinite divisibility in the scalar-valued case \cite{mvp-jcw}. To conclude, we need to show the continuity of the functions $\sigma_{x}$ and $\gamma_x$. To this purpose, observe that \[\left|\cT_{u}\left(\frac{z}{1-z}\right)(x)\right|=\exp \left(-\int_{\xi \in \mathbb{T}}\frac{1-|z|^2}{|z - \xi |^2}\,d\sigma_{x}(1/\xi)\right).\] In particular, we have \[\exp \left(-\sigma_{x}(\mathbb{T})\right)=|\cT_{u}(0)(x)|.\] Thus, if $\{x_\alpha\}$ is a net converging to a point $x \in X$, then the family $\{\sigma_{x_\alpha}(\mathbb{T})\}$ is bounded, and for every $z \in \mathbb{D}$ we have \[\int_{\xi \in \mathbb{T}}\frac{1-|z|^2}{|z - \xi |^2}\,d\sigma_{x_\alpha}(1/\xi)\rightarrow \int_{\xi \in \mathbb{T}}\frac{1-|z|^2}{|z - \xi |^2}\,d\sigma_{x}(1/\xi)\] as $x_\alpha \rightarrow x$. Since bounded sets in $\mathcal{M}$ are compact in the weak-star topology, the above convergence of Poisson integrals determines uniquely the limit $\sigma_x$ of the net $\{\sigma_{x_\alpha}\}$. We deduce that the measures $\{\sigma_{x_\alpha}\}$ converge weakly to $\sigma_{x}$, and therefore the function $\sigma_x$ is continuous. This and the identity \[\cT_{u}\left(0\right)(x)=\gamma_{x} \exp\left(-\sigma_{x}(\mathbb{T})\right)\] imply further the continuity of the function $\gamma_x$. 
\end{proof}

We end the paper with the remark that if the algebra $B$ is non-commutative, a $B$-valued contractive map $B(z)$ does not always have contractive analytic $n$-th roots for each $n\geq 1$, even when $B(z)$ is invertible for every $z\in \mathbb{D}$. To illustrate, consider the constant map \[
B(z)=\left[\begin{array}{cc}
\lambda^{2} & 2\lambda\\
0 & \lambda^{2}
\end{array}\right],\quad z \in \mathbb{D}.
\] Then $B(z)$ is an invertible contraction if the modulus $|\lambda|$ is sufficiently small, and yet its square roots \[
\left[\begin{array}{cc}
\pm \lambda & 1\\
0 & \pm \lambda
\end{array}\right]
\] have the same norm $\sqrt{1+|\lambda|^2}$.


\subsection*{Acknowledgements}
This work was partially supported by a grant of the Romanian National Authority for Scientific Research, CNCS – UEFISCDI, project number PN-II-ID-PCE-2011-3-0119.

\end{document}